\newcommand{\vA}{{\mathbf{A}}}
\newcommand{\vB}{{\mathbf{B}}}
\newcommand{\vD}{{\mathbf{D}}}
\newcommand{\vI}{{\mathbf{I}}}
\newcommand{\vL}{{\mathbf{L}}}
\newcommand{\vM}{{\mathbf{M}}}
\newcommand{\vS}{{\mathbf{S}}}
\newcommand{\vU}{{\mathbf{U}}}
\newcommand{\vV}{{\mathbf{V}}}
\newcommand{\vX}{{\mathbf{X}}}
\newcommand{\vY}{{\mathbf{Y}}}
\newcommand{\vZ}{{\mathbf{Z}}}
\newcommand{\cA}{{\mathcal{A}}}
\newcommand{\RR}{\mathbb{R}}
\newcommand{\sign}{\mathrm{sign}}
\newcommand{\vzero}{\mathbf{0}}
\newcommand{\St}{{\mathrm{subject~to}}} 
\newcommand{\diag}{{\mathrm{diag}}} 
\newcommand{\rank}{{\mathrm{rank}}} 
\newcommand{\prox}{\mathbf{prox}}
\DeclareMathOperator*{\argmin}{arg\,min}
\DeclareMathOperator*{\Min}{minimize}
\newtheorem{theorem}{Theorem}[section]
\newtheorem{lemma}[theorem]{Lemma}
\theoremstyle{definition}
\title[Fast algorithms for robust principal component analysis] 
{Fast algorithms for robust principal component analysis with an upper bound on the rank}
\author[Ningyu Sha, Lei Shi, and Ming Yan]{}
\subjclass{Primary: 65K10, 90C26; Secondary: 65D18.}
\keywords{Robust principal component analysis, nonconvex, acceleration, low-rank, sparse.}
\email{shaningy@msu.edu}
\email{leishi@fudan.edu.cn}
\email{myan@msu.edu}
\thanks{N. Sha and M. Yan are supported by NSF grant DMS-1621798 and DMS-2012439. L. Shi is supported by NNSFC grant 11631015 and Shanghai Science and Technology Research Program 19JC1420101.}
\thanks{$^*$ Corresponding author.}
\begin{document}
\maketitle

\centerline{\scshape Ningyu Sha}
\medskip
{\footnotesize
	\centerline{Department of Computational Mathematics, Science and Engineering}
	\centerline{Michigan State University, East Lansing, MI 48824, USA}
} 
\medskip

\centerline{\scshape Lei Shi}
\medskip
{\footnotesize
	\centerline{School of Mathematical Sciences, Shanghai Key Laboratory for Contemporary Applied Mathematics}
	\centerline{Key Laboratory of Mathematics for Nonlinear Sciences (Fudan University), Ministry of Education}
	\centerline{Fudan University, Shanghai, 200433, P.R. China}
} 

\medskip

\centerline{\scshape Ming Yan$^*$}
\medskip
{\footnotesize
	\centerline{Department of Computational Mathematics, Science and Engineering, Department of Mathematics}
	\centerline{Michigan State University, East Lansing, MI 48824, USA}
}

\bigskip

\centerline{(Communicated by the associate editor name)}

\begin{abstract}
The robust principal component analysis (RPCA) decomposes a data matrix into a low-rank part and a sparse part. There are mainly two types of algorithms for RPCA. The first type of algorithm applies regularization terms on the singular values of a matrix to obtain a low-rank matrix. However, calculating singular values can be very expensive for large matrices. The second type of algorithm replaces the low-rank matrix as the multiplication of two small matrices. They are faster than the first type because no singular value decomposition (SVD) is required. However, the rank of the low-rank matrix is required, and an accurate rank estimation is needed to obtain a reasonable solution. In this paper, we propose algorithms that combine both types. Our proposed algorithms require an upper bound of the rank and SVD on small matrices. First, they are faster than the first type because the cost of SVD on small matrices is negligible. Second, they are more robust than the second type because an upper bound of the rank instead of the exact rank is required. Furthermore, we apply the Gauss-Newton method to increase the speed of our algorithms. Numerical experiments show the better performance of our proposed algorithms.   
\end{abstract}

\section{Introduction}
Robust principal component analysis (RPCA) decomposes a data matrix into a low-rank part and a sparse part. It has applications in a wide range of areas, including computer vision~\cite{de2001robust}, image processing~\cite{liu2012robust,elhamifar2013sparse}, dimensionality reduction~\cite{cunningham2015linear}, and bioinformatics data analysis~\cite{da2009weighted}. More specifically, the RPCA model has achieved great success in video surveillance and face recognition~\cite{candes2011robust,bouwmans2014robust}. For example, in video surveillance, the low-rank part preserves the stationary background, whereas the sparse part can capture a moving object or person in the foreground. 

We first assume that the data matrix $\vD$ is obtained by the sum of a low-rank matrix and a sparse matrix. That is 
\begin{align*}
\vD = \vL + \vS,
\end{align*}
where $\vL$ is a low-rank matrix and $\vS$ is a sparse matrix having only a few nonzero entries. RPCA is an inverse problem to recover $\vL$ and $\vS$ from the matrix $\vD$, which can be realized via solving the idealized nonconvex problem
\begin{equation}\label{pro:L0L0C}
\Min_{\vL, \vS}~\rank(\vL) + \lambda \|\vS\|_0, ~\St~\vL + \vS = \vD,
\end{equation}
where $\lambda$ is a parameter to balance the two objectives and $\|\vS\|_0$ counts the number of non-zero entries in $\vS$. However, this problem is NP-hard in general~\cite{amaldi1998approximability}. Therefore, much attention is focused on the following convex relaxation:
\begin{equation}\label{pro:L1L1C}
\Min_{\vL, \vS}~\|\vL\|_* + \lambda \|\vS\|_1, ~\St~\vL + \vS = \vD.
\end{equation} 
Here $\|\cdot\|_*$ and $\|\cdot\|_1$ denote the nuclear norm and $\ell_1-$norm of a matrix, respectively. It is shown that under mild conditions, the convex model~\eqref{pro:L1L1C} can exactly recover the low-rank and sparse parts with high probabilities~\cite{candes2011robust}.  When additional Gaussian noise is considered, we can set the noise level to be $\epsilon$ and use the Frobenius norm $\|\cdot\|_F$ to measure the reconstruction error. Then, the problem becomes 
\begin{equation}\label{pro:L1L1C2}
\Min_{\vL, \vS}~\|\vL\|_* + \lambda \|\vS\|_1, ~\St~\|\vL + \vS - \vD\|_F^2\leq \epsilon.
\end{equation}
This constrained optimization problem is equivalent to the unconstrained problem
\begin{equation}\label{pro:L1L1U1}
\Min_{\vL, \vS }~\frac{\mu}{2} \|\vL+\vS-\vD\|_F^2 + \mu\|\vL\|_* + \lambda\mu \|\vS\|_1
\end{equation} 
with a trade-off parameter $\mu$. There is a correspondence between the two parameters $\epsilon$ and $\mu$ in~\eqref{pro:L1L1C2} and~\eqref{pro:L1L1U1}, but the explicit expression does not exist. In this paper, we will focus on the unconstrained problem~\eqref{pro:L1L1U1}, and the technique introduced in this paper can be applied to the convex models~\eqref{pro:L1L1C} and~\eqref{pro:L1L1C2}. Please see Section~\ref{section4} for more details.

There are many existing approaches for solving~\eqref{pro:L1L1U1}, including the augmented Lagrange method~\cite{lin2010augmented,bouwmans2014robust,wright2009robust}. Some examples are proximal gradient method for $(\vL,\vS)$, alternating minimization for $\vL$ and $\vS$~\cite{Shen018}, proximal gradient method for $\vL$ after $\vS$ is eliminated~\cite{sha2019efficient}, alternating direction method of multipliers (ADMM)~\cite{yuan2009sparse,tao2011recovering}. All these approaches need to find the proximal of the nuclear norm, which requires singular value decomposition (SVD). When the matrix size is large, the SVD computation is very expensive and dominates other computation~\cite{trefethen1997numerical}. 

Alternative approaches for RPCA use matrix decomposition~\cite{wen2012solving} and do not require SVD. Assuming that the rank of $\vL$ is known as $p$, we can decompose it as 
$$\vL=\vX\vY^\top,$$
with $\vX\in\RR^{m\times p}$ and $\vY\in\RR^{n\times p}$. Then the following nonconvex optimization problem 
\begin{equation}\label{pro:L1L1U}
\Min_{\vX,\vY,\vS }~\frac{1}{2} \|\vX\vY^\top+\vS-\vD\|_F^2 + \lambda \|\vS\|_1,
\end{equation}
is considered. There are infinite many optimal solutions for this problem, since for any invertable matrix $\vA\in\RR^{p\times p}$, $(\vX,\vY,\vS)$ and $(\vX\vA^{-1},\vY\vA^{\top},\vS)$ have the same objective value. In fact, for any matrix $\vL$ with rank no greater than $p$, we can find $\vL=\vX\vY^\top$ with $\vY^\top\vY=\vI_{p\times p}$. Therefore, we can have an additional constraint $\vY^\top\vY=\vI_{p\times p}$. The resulting problem still has infinite many optimal solutions, since for any orthogonal matrix $\vA\in\RR^{p\times p}$, $(\vX,\vY,\vS)$ and $(\vX\vA,\vY\vA,\vS)$ have the same objective value. Though $(\vX,\vY)$ are not unique, the low-rank matrix $\vL=\vX\vY^\top$ that we need could be unique. This resulting problem was discussed in~\cite{Shen018}, and an efficient algorithm by alternating minimizing $\vX\vY^\top$ and $\vS$ is provided. In this algorithm, a Gauss-Newton algorithm is applied to update $\vX\vY^\top$ and reduce the time.

Though the matrix decomposition approach could be solved faster than the nuclear norm minimization approach because no SVD is required, it is nonconvex and requires an accurate estimation of the rank of $\vL$. Fig.~\ref{fig:robust} in Section~\ref{sec:robust} demonstrates that a good estimation of the rank is critical. However, in most scenarios, we do not have the exact rank of $\vL$, but we can have an upper bound of the true rank. Therefore, we can combine the matrix decomposition and the nuclear norm minimization to have the benefits of both approaches: fast speed and robustness in the rank. The problem we consider in this paper is 
\begin{align}\label{pro:model2}
\Min_{\vL,\vS }~\frac{1}{2} \|\vL+\vS-\vD\|_F^2 + \mu\|\vL\|_*+ \lambda \|\vS\|_1, ~\St~ \mbox{rank}(\vL)\leq p.
\end{align}
When $\mu=0$, the problem~\eqref{pro:model2} is equivalent to~\eqref{pro:L1L1U}. In addition, we consider the following more general problem
\begin{align}\label{pro:general}
\Min_{\vL,\vS }~\frac{1}{2} \|\mathcal{A}(\vL)+\vS-\vD\|_F^2 + \mu\|\vL\|_*+ \lambda \|\vS\|_1, ~\St~ \mbox{rank}(\vL)\leq p,
\end{align}
where $\vD$ is the measurement of $\mathcal{A}(\vL)$ contaminated with both Gaussian noise and sparse noise. Here $\mathcal{A}$ is a bounded linear operator that describes how the measurements are calculated. For example, in robust matrix completion, we let $\mathcal{A}$ be the restriction operator on the given components of the matrix $\vL$. 

Note that the alternating minimization algorithm in~\cite{Shen018} can not be applied to this general problem because the subproblem for $\vL$ can no longer be solved efficiently by the Gauss-Newton method. We will show the equivalency of the alternating minimization algorithm in~\cite{Shen018} and a proximal gradient method applied to a problem with $\vL$ only. Then the subproblem of $\vL$ in our general problem~\eqref{pro:general} can still be solved efficiently with the Gauss-Newton method. Please see more details in Section~\ref{section2}.

For simplicity, we use the nuclear norm and $\ell_1-$norm for the low-rank and sparse matrices, respectively. The main purpose of this paper is to introduce a fast algorithm to solve~\eqref{pro:general}. Though the technique can be applied to variants of~\eqref{pro:general}, as will be shown in Section~\ref{section4}, the comparison of different penalties is out of the scope of this paper. The contributions of this paper are: 
\begin{itemize}
	\item We propose a new model for RPCA, which combines the nuclear norm minimization and the matrix decomposition. The matrix decomposition brings efficient algorithms, and the nuclear norm minimization on a smaller matrix removes the requirement of the rank of the low-rank matrix. Note that other nonconvex penalties can replace the nuclear norm minimization, and the results in this paper are still valid. 
	\item We develop efficient algorithms using Gauss-Newton to solve this problem and show its convergence.
\end{itemize}

\subsection{Notation}
Throughout this paper, matrices are denoted by bold capital letters (e.g., $\vA$), and operators are denoted by calligraphic letters (e.g., $\mathcal{A}$). In particular, $\vI$ denotes the identity matrix, $\vzero$ denotes the zero matrix (all entries equal zero), and $\mathcal{I}$ denotes the identity operator. If there is potential for confusion, we indicate the dimension of matrix with subscripts. For a matrix $\vA$, $\vA^\top$ represents its transpose and $\vA(:,j:k)$ denotes the matrix composed by the columns of $\vA$ indexing from $j$ to $k$. Let $\vA_{i,j}$ be the $(i,j)$ entry of $\vA$. The $\ell_1-$norm of $\vA$ is given by $\|\vA\|_1=\sum_{i,j}|\vA_{i,j}|$.  We denote the $i$th singular value of $\vA$  by $\sigma_i(\vA)$. The nuclear norm of $\vA$ is given by $\|\vA\|_*=\sum_i \sigma_i(\vA)$. We will use $\partial \|\cdot\|_1$  and $\partial \|\cdot\|_*$ to denote the subgradients of $\ell_1-$norm and nuclear norm, respectively. The linear space of all $m\times n$ real matrices is denoted by $\RR^{m\times n}$. For $\vA, \vB\in \RR^{m\times n}$, the inner product of $\vA,\vB$ is defined by $\langle \vA, \vB \rangle=\mbox{Tr}(\vA^\top \vB)$, which induces the Frobenius norm $\|\vA\|_F=\sqrt{\mbox{Tr}(\vA^\top \vA)}=\sqrt{\sum_i \sigma^2_i(\vA)}$. Let $\mathcal{A}$ be a linear bounded operator on $\RR^{m\times n}$. The operator norm of $\mathcal{A}$ is given by $\|\mathcal{A}\|=\sup\{\|\mathcal{A}(\vA)\|_F:\vA \in \mathbb{R}^{m \times n},\|\vA\|_F=1\}$. The adjoint operator of $\mathcal{A}$ denoted by $\mathcal{A}^*$ is also linear and bounded on $\RR^{m\times n}$ such that $\langle \mathcal{A}(\vA), \vB \rangle=\langle \vA, \mathcal{A}^*(\vB) \rangle$. Notation $\odot$ is used to denote the component-wise multiplication. Additionally, for a function $f:\RR \to \RR$, without further reference, $f$ acting on a matrix $\vA\in \RR^{m\times n}$ specifies that $f$ is evaluated on each entry of $\vA$, i.e., $f(\vA)\in \RR^{m \times n}$ with $(f(\vA))_{i,j}=f(\vA_{i,j})$. For example, if $f(x)=|x|-\lambda$, we can denote $f(\vA)\in \RR^{m\times n}$ by $|\vA|-\lambda$ with $(|\vA|-\lambda)_{i,j}=|\vA_{i,j}|-\lambda$.

\subsection{Organization} 
The rest of the paper is organized as follows. We introduce our proposed algorithms and show their convergence in Section~\ref{section2}. Then we conduct numerical experiments to compare our proposed algorithms' performance with existing approaches in Section~\ref{sec:num}. In Section~\ref{section4}, we conclude this paper with some potential extensions.

\section{Proposed algorithms}\label{section2}
The problem~\eqref{pro:model2} is nonconvex because of the constraint $\mbox{rank}(\vL)\leq p$. It has several equivalent formulations. E.g., it is equivalent to the following nonconvex weighted nuclear norm minimization problem:
\begin{align*}
\Min_{\vL,\vS }~\frac{1}{2} \|\vL+\vS-\vD\|_F^2 + \mu\sum_{i=1}^p\sigma_i(\vL)+C\sum_{i=p+1}^{\min(m,n)}\sigma_i(\vL)+ \lambda \|\vS\|_1,
\end{align*}
where $C$ is a sufficiently large number such that the optimal $\vL$ has at most $p$ nonzero singular values. However, this formulation also requires the singular value decomposition of a $m\times n$ matrix in each iteration, which is expensive when $m$ and $n$ are large. We consider another equivalent problem with matrix decomposition in the following theorem.
\begin{theorem}\label{Thm2.1}
	Problem~\eqref{pro:model2} is equivalent to 
	\begin{equation}\label{pro:model}
	\Min_{\vX,\vY,\vS }~\frac{1}{2} \|\vX\vY^\top+\vS-\vD\|_F^2 + \mu\|\vX\|_*+ \lambda \|\vS\|_1, ~\St~ \vY^\top\vY=\vI_{p\times p}.
	\end{equation}
	More specifically, if $(\vX,\vY,\vS)$ is an optimal solution to~\eqref{pro:model}, then $(\vX\vY^\top,\vS)$ is an optimal solution to~\eqref{pro:model2}. If $(\vL,\vS)$ is an optimal solution to~\eqref{pro:model2} and we have the decomposition $\vL=\vX\vY^\top$ with $\vY^\top\vY=\vI_{p\times p}$, then $(\vX,\vY,\vS)$ is an optimal solution to~\eqref{pro:model}.
\end{theorem}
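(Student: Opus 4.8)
The plan is to establish the equivalence of~\eqref{pro:model2} and~\eqref{pro:model} by showing that the two objectives coincide whenever the variables are related through $\vL = \vX\vY^\top$ with $\vY^\top\vY = \vI_{p\times p}$, and that both directions of the correspondence produce admissible points. The key observation, which I would isolate first as the workhorse of the argument, is that for any $\vX \in \RR^{m\times p}$ and any $\vY \in \RR^{n\times p}$ with $\vY^\top\vY = \vI_{p\times p}$ we have $\|\vX\vY^\top\|_* = \|\vX\|_*$. This follows because right-multiplication by a matrix with orthonormal columns preserves singular values: if $\vX = \vU\vSigma\vV^\top$ is an SVD of $\vX$, then $\vX\vY^\top = \vU\vSigma(\vY\vV)^\top$ and $(\vY\vV)^\top(\vY\vV) = \vV^\top\vY^\top\vY\vV = \vI$, so $\vU\vSigma(\vY\vV)^\top$ is (up to padding with zero singular values) an SVD of $\vX\vY^\top$; hence the nuclear norms agree. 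Consequently, for any feasible $(\vX,\vY,\vS)$ for~\eqref{pro:model}, the triple $(\vL,\vS) = (\vX\vY^\top,\vS)$ satisfies $\rank(\vL)\le p$ and has the same objective value in~\eqref{pro:model2} as $(\vX,\vY,\vS)$ has in~\eqref{pro:model}.

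Next I would handle the reverse association. Given any $\vL$ with $\rank(\vL)\le p$, one can always factor $\vL = \vX\vY^\top$ with $\vY^\top\vY = \vI_{p\times p}$: take a (thin) SVD $\vL = \vU_L\vSigma_L\vV_L^\top$ with $\vU_L \in \RR^{m\times p}$, $\vV_L \in \RR^{n\times p}$ having orthonormal columns (padding with extra orthonormal columns if $\rank(\vL) < p$), and set $\vX = \vU_L\vSigma_L$, $\vY = \vV_L$. Then $\vY^\top\vY = \vI_{p\times p}$ and, by the norm identity above, $\|\vX\|_* = \|\vX\vY^\top\|_* = \|\vL\|_*$. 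Thus every feasible $(\vL,\vS)$ for~\eqref{pro:model2} lifts to a feasible $(\vX,\vY,\vS)$ for~\eqref{pro:model} with the same objective value. Combining the two maps shows that the infima of~\eqref{pro:model2} and~\eqref{pro:model} are equal, and that optimality is transported in both directions: if $(\vX,\vY,\vS)$ solves~\eqref{pro:model} but $(\vX\vY^\top,\vS)$ were not optimal for~\eqref{pro:model2}, a strictly better $(\vL',\vS')$ would lift to a strictly better feasible point of~\eqref{pro:model}, a contradiction; and symmetrically for the other direction, using any fixed factorization $\vL = \vX\vY^\top$ with $\vY^\top\vY=\vI_{p\times p}$.

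I expect the only genuine subtlety — more a bookkeeping point than a real obstacle — to be the handling of the rank-deficient case $\rank(\vL) < p$, where the factorization must be padded to have exactly $p$ columns in $\vY$ while keeping $\vY^\top\vY = \vI_{p\times p}$ and not disturbing $\vX\vY^\top$; one pads $\vX$ with corresponding zero columns so the product is unchanged and $\|\vX\|_*$ still equals $\|\vL\|_*$. A second minor point is to confirm that the feasible set of~\eqref{pro:model} is nonempty and that infima are attained (so "optimal solution" is meaningful) — this is immediate since the objective is coercive in $(\vX,\vS)$ on the closed feasible set and the Stiefel manifold $\{\vY : \vY^\top\vY = \vI_{p\times p}\}$ is compact. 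Everything else is the routine transfer of objective values through the nuclear-norm identity.
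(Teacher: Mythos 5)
Your proposal is correct and follows essentially the same route as the paper: the central identity $\|\vX\vY^\top\|_*=\|\vX\|_*$ is proved via the same SVD argument ($\vX\vY^\top=\vU\Sigma(\vY\vV)^\top$ with $\vY\vV$ having orthonormal columns), and the correspondence between feasible points is obtained from the compact SVD factorization of any rank-$\le p$ matrix, exactly as in the paper. Your additional remarks on padding in the rank-deficient case and on attainment of the infima are sound but not needed beyond what the paper already records.
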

\begin{proof}
	For any matrix $\vL\in\RR^{m\times n}$ with rank no greater than $p$, we can have the decomposition 
	$$\vL=\vX\vY^\top,$$
	with $\vY^\top\vY=\vI_{p\times p}.$ 
	This decomposition is not unique, and one decomposition can be easily obtained from the compact SVD of $\vL$. 
	Let $\vL=\vU_p\Sigma_p\vV^\top_p$ be the SVD of $\vL$ with a square $p\times p$ matrix $\Sigma_p$, we have $\vV_p^\top\vV_p=\vI_{p\times p}$.
	Thus, problem~\eqref{pro:model2} is equivalent to 
	\begin{align*}
	\Min_{\vX,\vY,\vS }~\frac{1}{2} \|\vX\vY^\top+\vS-\vD\|_F^2 + \mu\|\vX\vY^\top\|_*+ \lambda \|\vS\|_1, ~\St~ \vY^\top\vY=\vI_{p\times p}.
	\end{align*}
	For any $\vX\in\RR^{m\times p}$, let $\vX=\vU\Sigma\vV^\top$ be its SVD with $\vU\in\RR^{m\times p}$ and $\vV\in\RR^{p\times p}$. We have 
	$$\vX\vY^\top = \vU\Sigma\vV^\top\vY^\top=\vU\Sigma(\vY\vV)^\top.$$
	Since $(\vY\vV)^\top(\vY\vV)=\vV^\top\vY^\top\vY\vV=\vV^\top\vV=\vI_{p\times p}$. The SVD of $\vX\vY^\top$ is $\vU\Sigma(\vY\vV)^\top$, and $\|\vX\vY^\top\|_* =\sum_{i=1}^p\Sigma_{ii}=\|\vX\|_*$. 
	Thus, problem~\eqref{pro:model2} is equivalent to~\eqref{pro:model}.
\end{proof}

Next, we consider problem~\eqref{pro:model} with $\vS$ fixed. When $\vS$ is fixed, it becomes a problem of $\vL=\vX\vY^\top$, and solving this problem is to find the proximal operator of the corresponding nonconvex weighted nuclear norm, which is denoted as
\begin{equation}\label{pro:model2_fixedS}
\Min_{\vL }~\frac{1}{2} \|\vL-\vM\|_F^2 + \mu\|\vL\|_*, ~\St~ \mbox{rank}(\vL)\leq p,
\end{equation} or equivalently
\begin{equation}\label{pro:model_fixedS}
\Min_{\vX,\vY }~\frac{1}{2} \|\vX\vY^\top-\vM\|_F^2 + \mu\|\vX\|_*, ~\St~ \vY^\top\vY=\vI_{p\times p},
\end{equation} where $\vM=\vD-\vS$.

\begin{theorem}\label{thm:proximal} Let $q=\min(m,n)$. Problem~\eqref{pro:model2_fixedS} can be solved in two steps:
	\begin{enumerate}
		\item Find the compact SVD of $\vM=\vU\Sigma\vV^\top$, with 	    $\Sigma=\diag(\sigma_1(\vM),\cdots,\sigma_{q}(\vM))$ satisfying $\sigma_1(\vM)\geq \sigma_2(\vM)\geq \cdots\geq \sigma_q(\vM)$;
		\item Construct a diagonal matrix $\hat\Sigma_\mu\in\RR^{p\times p}$ with $(\hat\Sigma_\mu)_{ii}=\max(\Sigma_{ii}-\mu,0)$, then one solution of~\eqref{pro:model2_fixedS} is $\vU(:,1:p)\hat\Sigma_{\mu}\vV(:,1:p)^\top$.
	\end{enumerate}
	In addition, for any orthogonal matrix $\vA\in\RR^{p\times p}$, $(\vU(:,1:p)\hat\Sigma_{\mu}\vA, \vV(:,1:p)\vA)$ is an optimal solution of~\eqref{pro:model_fixedS}.
\end{theorem}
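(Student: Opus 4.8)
The plan is to convert \eqref{pro:model2_fixedS} into a collection of decoupled one‑dimensional minimizations by means of von Neumann's trace inequality, to verify that the matrix $\vU(:,1:p)\hat\Sigma_\mu\vV(:,1:p)^\top$ attains the resulting lower bound, and finally to transfer the conclusion to \eqref{pro:model_fixedS} using the singular‑value identities already established in the proof of Theorem~\ref{Thm2.1}.

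First I would expand the objective of \eqref{pro:model2_fixedS} as $\tfrac12\|\vM\|_F^2+\tfrac12\|\vL\|_F^2-\langle\vL,\vM\rangle+\mu\|\vL\|_*$. Denoting the singular values of a feasible $\vL$ (which has rank at most $p$) by $\tau_1\ge\cdots\ge\tau_p\ge 0$, padded with zeros if necessary, we have $\|\vL\|_F^2=\sum_{i=1}^p\tau_i^2$ and $\|\vL\|_*=\sum_{i=1}^p\tau_i$, while von Neumann's trace inequality gives $\langle\vL,\vM\rangle\le\sum_{i=1}^p\tau_i\sigma_i(\vM)$. Hence the objective is bounded below by $\tfrac12\|\vM\|_F^2+\sum_{i=1}^p\big(\tfrac12\tau_i^2-\tau_i(\sigma_i(\vM)-\mu)\big)$, which is separable in $\tau_1,\dots,\tau_p$. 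Minimizing the $i$th scalar term $\tfrac12\tau^2-\tau(\sigma_i(\vM)-\mu)$ over $\tau\ge 0$ yields $\tau_i^\star=\max(\sigma_i(\vM)-\mu,0)=(\hat\Sigma_\mu)_{ii}$; since $\sigma_1(\vM)\ge\cdots\ge\sigma_q(\vM)$, the sequence $\tau_i^\star$ is nonincreasing, so it is a legitimate list of singular values and the minimum of the lower bound over all feasible $\vL$ is its value at $\tau_i=\tau_i^\star$.

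Next I would check that $\vL^\star:=\vU(:,1:p)\hat\Sigma_\mu\vV(:,1:p)^\top$ realizes this value. Its nonzero singular values are precisely $\tau_1^\star,\dots,\tau_p^\star$, which pins down the $\tfrac12\|\vL^\star\|_F^2+\mu\|\vL^\star\|_*$ contribution, and since $\vL^\star$ and $\vM$ are built from the same orthonormal families $\vU(:,1:p)$ and $\vV(:,1:p)$, expanding $\langle\vL^\star,\vM\rangle$ directly gives $\sum_{i=1}^p\tau_i^\star\sigma_i(\vM)$, so von Neumann's inequality holds with equality. Thus $\vL^\star$ is feasible (rank at most $p$) and attains the global lower bound, hence solves \eqref{pro:model2_fixedS}. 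For the claim about \eqref{pro:model_fixedS}, I would first note that \eqref{pro:model2_fixedS} and \eqref{pro:model_fixedS} are equivalent by the same argument as in Theorem~\ref{Thm2.1} (write a feasible rank‑$\le p$ matrix as $\vX\vY^\top$ with $\vY^\top\vY=\vI_{p\times p}$ and use $\|\vX\vY^\top\|_*=\|\vX\|_*$). Then, for any orthogonal $\vA\in\RR^{p\times p}$, set $\vX=\vU(:,1:p)\hat\Sigma_\mu\vA$ and $\vY=\vV(:,1:p)\vA$; one checks $\vY^\top\vY=\vI_{p\times p}$, $\vX\vY^\top=\vL^\star$, and $\|\vX\|_*=\|\hat\Sigma_\mu\|_*=\|\vL^\star\|_*$ because left‑multiplying $\hat\Sigma_\mu$ by a matrix with orthonormal columns and right‑multiplying by an orthogonal matrix preserves singular values. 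So $(\vX,\vY)$ has the same objective value as the optimal $\vL^\star$, and is therefore optimal for \eqref{pro:model_fixedS}.

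The only non‑elementary ingredient, and hence the step that requires the most care, is von Neumann's trace inequality together with the characterization of its equality case: I would want to state precisely that truncating $\vM$ to its top $p$ singular values causes no loss (the zero padding of $\tau_i$ for $i>p$ is harmless in the inequality) and that the equality configuration is exactly the "shared singular subspaces" situation used to evaluate the objective at $\vL^\star$. Everything else — the scalar minimization, the monotonicity of $\tau_i^\star$, and the orthogonal‑invariance identities needed for \eqref{pro:model_fixedS} — is routine bookkeeping.
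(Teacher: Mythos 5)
Your proposal is correct and follows essentially the same route as the paper: both rest on von Neumann's trace inequality to reduce the problem to decoupled scalar minimizations $\min_{\tau\ge 0}\frac12\tau^2-\tau(\sigma_i(\vM)-\mu)$, yielding the soft-thresholded singular values, and both transfer the result to the factored problem via the equivalence argument of Theorem~\ref{Thm2.1}. Your version is organized slightly more carefully (global lower bound first, then verification that the candidate attains it, rather than the paper's restriction to simultaneously diagonalizable matrices), but this is the same proof in substance.
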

\begin{proof}
	Given any $\vL\in\RR^{m\times n}$ with $\mbox{rank}(\vL)\leq p$, let $\sigma_1,\sigma_2,\cdots,\sigma_q$ be its singular values in the decreasing order such that $\sigma_{p+1}=\cdots=\sigma_{q}=0$. Note that the main diagonal entries of $\Sigma$ are the singular values of $\vM$. According to von-Neumann trace inequality~\cite[Theorem 7.4.1.1]{horn2012matrix}, one can bound the matrix inner product by the singular values, i.e., $\langle\vL, \vM\rangle\leq \sum_{i=1}^q \sigma_i\Sigma_{ii}$. Then we have 
	\begin{eqnarray}\label{eq1}
	\begin{aligned}
	\frac{1}{2} \|\vL-\vM\|_F^2 + \mu\|\vL\|_*
	= & \frac{1}{2}\|\vL\|_F^2+\frac{1}{2}\|\vM\|_F^2-\langle\vL, \vM\rangle+\mu\|\vL\|_*\\
	\geq & \frac{1}{2}\sum_{i=1}^q \sigma^2_i+\frac{1}{2}\sum_{i=1}^q\Sigma^2_{ii}-\sum_{i=1}^q \sigma_i\Sigma_{ii}+\mu \sum_{i=1}^q\sigma_i\\
	= & \frac{1}{2}\sum_{i=1}^p \sigma^2_i+\frac{1}{2}\sum_{i=1}^q\Sigma^2_{ii}-\sum_{i=1}^p \sigma_i\Sigma_{ii}+\mu \sum_{i=1}^p\sigma_i,    
	\end{aligned}
	\end{eqnarray}
	where the equality is satisfied when $\vL$ has a simultaneous SVD with $\vM$ through $\vU$ and $\vV$. Therefore, the optimal $\vL$ minimizing $\frac{1}{2} \|\vL-\vM\|_F^2 + \mu\|\vL\|_*$ can be selected from the matrices that have a simultaneous SVD with $\vM$ through $\vU$ and $\vV$. Then we can assume that the optimal $\vL$ satisfies $$\vL=\vU\mbox{diag}(\sigma_1,\cdots,\sigma_p,\sigma_{p+1},\cdots,\sigma_q)\vV^\top=\vU(:,1:p)\mbox{diag}(\sigma_1,\cdots,\sigma_p)\vV(:,1:p)^\top,$$ where the last equality holds because of the fact that $\sigma_{p+1}=\cdots=\sigma_{q}=0$. Next, one can construct an optimal $\vL$ of the above form by letting $\sigma_i=\max(\Sigma_{ii}-\mu,0)$ for $i=1,2,\cdots,p$, which minimizes the last equation in ~\eqref{eq1}. Thus $\vU(:,1:p)\hat\Sigma_{\mu}\vV(:,1:p)^\top$ minimizes the objective function of ~\eqref{pro:model2_fixedS} over all $\vL\in\RR^{m\times n}$ with rank no greater than $p$.
	
	By the same argument in the proof of Theorem \ref{Thm2.1}, we see that problem~\eqref{pro:model_fixedS} is equivalent to problem~\eqref{pro:model2_fixedS}. Since for any orthogonal matrix $\vA\in \RR^{p\times p}$, there hold
	$$\vL=(\vU(:,1:p)\hat\Sigma_{\mu}\vA)(\vV(:,1:p)\vA)^\top$$ and 
	$$(\vV(:,1:p)\vA)^\top(\vV(:,1:p)\vA)=\vA^\top \vA=\vI_{p\times p}.$$ Therefore, $(\vU(:,1:p)\hat\Sigma_{\mu}\vA,\vV(:,1:p)\vA)$ is an optimal solution of problem~\eqref{pro:model_fixedS}.\end{proof}

The first step to solve problem~\eqref{pro:model_fixedS} in the previous theorem requires the truncated SVD of an $m\times n$ matrix $\vM$. Since we only need the first $p$ ($p< q=\min(m,n)$) singular values, we use the Gauss-Newton algorithm to find $(\vX,\vY)$ alternatively. In this approach, we require the SVD of a $m\times p$ matrix, which is much faster than the truncated SVD of a $m\times n$ matrix when $p$ is small. In addition, we use the previous $\vX$ as the initial guess in the next iteration to reduce the number of inner iterations for the Gauss-Newton algorithm.

\begin{lemma}\label{lem2.3}
	If the rank of $\vM\in\RR^{m\times n}$ is larger than $p$, problem~\eqref{pro:model_fixedS} can be solved in the following three steps:
	\begin{enumerate}
		\item Find $\hat\vX\in\RR^{m\times p}$ ($p<m$) by solving the following optimization problem $$\Min_{\vX}~\frac{1}{2} \|\vX\vX^\top-\vM\vM^\top\|_F^2;$$
		\item $\vY=\vM^\top\hat\vX(\hat\vX^\top\hat\vX)^{-1}$;
		\item Let $\hat\vX=\vU_p\hat\Sigma\vA$ be its thin SVD with $\hat\Sigma\in\RR^{p\times p}$ and choose $\vX$ as $\vX=\vU_p\hat\Sigma_\lambda\vA$ with $(\hat\Sigma_\lambda)_{ii}=\max(0,\hat\Sigma_{ii}-\mu)$ for $i=1,\dots,p$. Then $(\vX,\vY)$ is an solution of problem~\eqref{pro:model_fixedS}.  
	\end{enumerate}
	
\end{lemma}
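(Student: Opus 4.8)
The plan is to run the three steps symbolically and check that they output a pair $(\vX,\vY)$ of exactly the form produced in Theorem~\ref{thm:proximal}; optimality for~\eqref{pro:model_fixedS} (equivalently~\eqref{pro:model2_fixedS}) then follows immediately from the last assertion of that theorem. Concretely, it suffices to show that Steps~1--3 yield $\vX=\vU(:,1:p)\,\hat\Sigma_\mu\,\vA$ and $\vY=\vV(:,1:p)\,\vA$ for some compact SVD $\vM=\vU\Sigma\vV^\top$ and some orthogonal $\vA\in\RR^{p\times p}$, where $(\hat\Sigma_\mu)_{ii}=\max(0,\sigma_i(\vM)-\mu)$.

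First I would analyze Step~1. The objective $\tfrac{1}{2}\|\vX\vX^\top-\vM\vM^\top\|_F^2$ depends on $\vX$ only through the symmetric positive semidefinite matrix $\vB=\vX\vX^\top$, and as $\vX$ ranges over $\RR^{m\times p}$ the matrix $\vB$ ranges over all such matrices of rank at most $p$. By the Eckart--Young--Mirsky theorem, together with the fact that the best rank-$p$ Frobenius approximation of a symmetric positive semidefinite matrix is again positive semidefinite, any global minimizer $\hat\vX$ of Step~1 satisfies $\hat\vX\hat\vX^\top=\vU_p\Sigma_p^2\vU_p^\top$, where $\Sigma_p^2=\diag(\sigma_1^2(\vM),\dots,\sigma_p^2(\vM))$ collects the $p$ largest eigenvalues of $\vM\vM^\top$ and the columns of $\vU_p$ form an orthonormal set of corresponding eigenvectors. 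Since $\rank(\vM)>p$ forces $\sigma_p(\vM)\ge\sigma_{p+1}(\vM)>0$, all these eigenvalues are positive, so $\hat\vX$ has full column rank and $\hat\vX^\top\hat\vX$ is invertible. I then fix a compact SVD $\vM=\vU\Sigma\vV^\top$ whose first $p$ left singular vectors are the columns of $\vU_p$ (the residual freedom being rotations inside equal-singular-value blocks), set $\vV_p=\vV(:,1:p)$ and $\Sigma_p=\diag(\sigma_1(\vM),\dots,\sigma_p(\vM))$, and deduce from $\hat\vX\hat\vX^\top=(\vU_p\Sigma_p)(\vU_p\Sigma_p)^\top$ that $\hat\vX=\vU_p\Sigma_p\vA$ for some orthogonal $\vA\in\RR^{p\times p}$; in particular the thin SVD of $\hat\vX$ in Step~3 has $\hat\Sigma=\Sigma_p$, so $\sigma_i(\hat\vX)=\sigma_i(\vM)$ for $i\le p$, which is why the soft-threshold there agrees with the one in Theorem~\ref{thm:proximal}.

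Next I would verify Steps~2 and~3 by direct substitution. Using the orthonormality of the columns of $\vU$ together with $\hat\Sigma=\Sigma_p$, a short computation gives $\vM^\top\hat\vX=\vV_p\Sigma_p^2\vA$ and $\hat\vX^\top\hat\vX=\vA^\top\Sigma_p^2\vA$, whence $\vY=\vM^\top\hat\vX(\hat\vX^\top\hat\vX)^{-1}=\vV_p\vA$ and $\vY^\top\vY=\vA^\top\vV_p^\top\vV_p\vA=\vI_{p\times p}$, as needed. Step~3 then sets $\vX=\vU_p\hat\Sigma_\mu\vA$, so that $(\vX,\vY)=\bigl(\vU(:,1:p)\,\hat\Sigma_\mu\,\vA,\ \vV(:,1:p)\,\vA\bigr)$ with $\vA$ orthogonal. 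This is precisely the family of optimizers of~\eqref{pro:model_fixedS} identified at the end of Theorem~\ref{thm:proximal}, which completes the argument.

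The step I expect to be the main obstacle is the analysis of Step~1: carefully arguing that every global minimizer $\hat\vX$ of the nonconvex problem $\min_{\vX}\tfrac{1}{2}\|\vX\vX^\top-\vM\vM^\top\|_F^2$ must produce the rank-$p$ spectral truncation of $\vM\vM^\top$, and then handling the bookkeeping when $\vM$ has repeated singular values, in which case the truncation, the eigenbasis $\vU_p$, and hence the SVD $\vU\Sigma\vV^\top$ are not unique and must be selected compatibly with the subspace chosen by $\hat\vX$ so that $\vU(:,1:p)$ and $\vV(:,1:p)$ align correctly. Everything downstream of fixing the SVD is routine linear algebra.
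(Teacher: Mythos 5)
Your proposal is correct and follows essentially the same route as the paper: characterize the minimizers of Step~1 as $\hat\vX=\vU(:,1:p)\hat\Sigma\vA$ with $\vA$ orthogonal, compute $\vY=\vV(:,1:p)\vA$ by direct substitution, and invoke Theorem~\ref{thm:proximal} for optimality after soft-thresholding the singular values. The only difference is in how Step~1 is justified: the paper derives the lower bound $\|\vX\vX^\top-\vM\vM^\top\|_F^2\geq\sum_{i=p+1}^{q}\Sigma_{ii}^4$ by a direct eigenvalue/trace inequality and merely exhibits minimizers attaining it, whereas you invoke Eckart--Young--Mirsky and additionally argue that \emph{every} global minimizer has this form (with the repeated-singular-value bookkeeping) --- a converse the paper leaves implicit but which the lemma as stated actually needs.
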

\begin{proof}
	Given any $\vX \in \RR^{m\times p}$, let $\lambda_1,\lambda_2,\cdots,\lambda_m$ be the non-negative eigenvalues of the matrix $\vX \vX^\top$. Since $\mbox{rank}(\vX)\leq p <m$, we have $\lambda_{p+1}=\cdots=\lambda_m=0$. Recall that the compact SVD of $\vM$ given in Theorem~\ref{thm:proximal} is $\vU\Sigma\vV^\top$ with $\Sigma\in\RR^{q\times q}$ (here $q=\min(m,n)$). Then $\Sigma^2_{11}\geq \Sigma^2_{11}\geq \cdots\geq \Sigma^2_{qq}$  are the largest $q$ eigenvalues of the matrix $\vM\vM^\top$, and if $q<m$, the remaining eigenvalues of $\vM\vM^\top$ are all zeros. Then we have
	\begin{eqnarray*}
		\begin{aligned}
			\|\vX\vX^\top - \vM\vM^\top\|^2_F &\geq \sum_{i=1}^p \lambda^2_i + \sum_{i=1}^q \Sigma^4_{ii}-2\sum_{i=1}^p \lambda_i \Sigma^2_{ii}\\
			&=\sum_{i=1}^p(\lambda_i-\Sigma^2_{ii})^2 + \sum_{i=p+1}^q \Sigma^4_{ii}\geq \sum_{i=p+1}^q \Sigma^4_{ii},
		\end{aligned}
	\end{eqnarray*} where the equality is satisfied when we choose $\vX=\vU(:,1:p)\mbox{diag}(\Sigma_{11},\cdots,\Sigma_{pp})$. Let $\hat\Sigma=\mbox{diag}(\Sigma_{11},\cdots,\Sigma_{pp})$. The matrix $\hat\Sigma$ is invertible as the rank of $\vM$ is larger than $p$. Then for any orthogonal matrix $\vA\in \RR^{p\times p}$, $\hat\vX=\vU(:,1:p)\hat\Sigma\vA$ minimizes the objective function $\frac{1}{2}\|\vX\vX^\top - \vM\vM^\top\|^2_F$.
	
	After we find $\hat\vX=\vU(:,1:p)\hat\Sigma\vA$ for a certain orthogonal matrix $\vA$, we have 
	\begin{align*}
	\vY = \vM^\top\hat\vX(\hat\vX^\top\hat\vX)^{-1} = & \vV \Sigma\vU^\top\vU(:,1:p)\hat\Sigma\vA ((\vU(:,1:p)\hat\Sigma\vA)^\top\vU(:,1:p)\hat\Sigma\vA)^{-1} \\
	= &  \vV\Sigma\vU^\top\vU(:,1:p) \hat \Sigma^{-1}\vA \\
	= &  \vV(:,1:p)\hat \Sigma \hat \Sigma^{-1}\vA = \vV(:,1:p)\vA,
	\end{align*} where the third equality is due to the fact that 
	$$\Sigma\vU^\top\vU(:,1:p)=\left[ \begin{array}{l} \hat\Sigma_{p\times p} \\ \vzero_{(q-p)\times p} \end{array} \right].$$
	
	According to Theorem~\ref{thm:proximal}, $(\hat\vX,\vY)$ is an optimal solution of problem~\eqref{pro:model_fixedS} if $\mu=0$. Note that $\hat\vX=\vU(:,1:p)\hat\Sigma\vA$ is the thin SVD with $\hat\Sigma\in\RR^{p\times p}$. Then, the third step gives $\vX=\vU(:,1:p)\hat\Sigma_\lambda\vA$. Theorem~\ref{thm:proximal} shows that $(\vX,\vY)$ is an optimal solution of problem~\eqref{pro:model_fixedS}. 
\end{proof}

{\it Remark}: To find $\hat\vX$ in the first step, we apply the Gauss-Newton algorithm from~\cite{liu2015efficient}, which is previously used for RPCA in~\cite{Shen018}. The iteration is $\vX\leftarrow\vM\vM^\top\vX(\vX^\top\vX)^{-1}-\vX((\vX^\top\vX)^{-1}\vX^\top\vM\vM^\top\vX(\vX^\top\vX)^{-1}-\vI)/2$. When $p$ is small, computing the inverse of $\vX^\top\vX$ is fast. Though an iterative algorithm is required to solve this subproblem at each outer iteration, we can use the output from the previous outer iteration as the initial and the number of inner iterations is reduced significantly. Therefore, the computational time can be reduced significantly, as shown in Section~\ref{sec:num}. In the numerical experiments, the first Gauss-Newton algorithm requires several hundred iterations, while the number for following Gauss-Newton algorithms reduces to less than ten.

From Theorem~\ref{thm:proximal}, we say that we solve the proximal operator of the nonconvex function $\|\vL\|_*+\iota_{\mbox{rank}(\vL)\leq p}(\vL)$ exactly. Here the indicator function is defined as
\begin{align*}
\iota_{\mbox{rank}(\vL)\leq p}(\vL)=\left\{\begin{array}{ll}0, & \mbox{ if rank}(\vL)\leq p;\\ +\infty, & \mbox{ otherwise.}\end{array}\right.
\end{align*} With these theorems, we are ready to develop optimization algorithms for the general problem~\eqref{pro:general}. 

\subsection{Forward-backward}
First, we eliminate $\vS$, and it becomes the following problem with $\vL$ only:
\begin{align}\label{eq:flambda}
\begin{aligned}
&\Min_{\vL:\rank(\vL)\leq p} \min_{\vS} \frac{1}{2}\|\mathcal{A} (\vL) + \vS -\vD\|_F^2 + \lambda \|\vS\|_1 + \mu \|\vL\|_* \\
= &\Min_{\vL:\rank(\vL)\leq p} \min_{\vS} \left\{\frac{1}{2}\|\mathcal{A} (\vL) + \vS -\vD\|_F^2 + \lambda \|\vS\|_1\right\} + \mu \|\vL\|_* \\
= &\Min_{\vL:\rank(\vL)\leq p} f_\lambda(\vD-\mathcal{A}(\vL)) + \mu \|\vL\|_*.
\end{aligned}
\end{align}
Here $f_\lambda$ is the Moreau envelope of $\lambda|\cdot|$ defined by $f_\lambda(x)=\min_{y\in \RR} \{\lambda|y|+\frac{1}{2}(y-x)^2\}$. So it is differential and has a 1-Lipschitz continuous gradient. Then we can apply the proximal-gradient method (or forward-backward operator splitting). We take the gradient of $f_\lambda$, which is given by 
\begin{equation}\label{grad}
f'_\lambda(x) = x-\sign(x)\max(0,|x|-\lambda)=\sign(x)\min(\lambda,|x|).
\end{equation}
The forward-backward iteration for $\vL$ with stepsize $t$ is
\begin{equation}\label{forward-backwardL}
\vL^{k+1}=\prox_{t\mu} \left(\vL^k-t\mathcal{A}^* f'_\lambda(\mathcal{A}(\vL^k)-\vD)\right),
\end{equation}
where the proximal operator is defined by 
\begin{equation}\label{prox}
\prox_{\mu}(\vA)=\argmin_{\vL:\rank(\vL)\leq p} \frac{1}{2}\|\vL-\vA\|^2_F + \mu \|\vL\|_*.
\end{equation}
The algorithm is summarized in Alg.~\ref{Alg1}.

\begin{algorithm}[H]
	\SetAlgoLined
	\KwIn{$\vD$, $\mu$, $\lambda$, $p$, $\mathcal{A}$, stepsize $t$,  stopping criteria $\epsilon$, maximum number of iterations $Max\_Iter$, initialization $\vL^0 = \vzero$ }
	\KwOut{$\vL$, $\vS$}
	\For{k = 0, 1, 2, 3, \dots , Max\_Iter }{
		$\vS = \sign(\vD-\cA(\vL^k))\odot\max(0,|\vD-\cA(\vL^k)|-\lambda)$ \;
		$\vL^{k+1} = \prox_{t\mu}(\vL^k - t \mathcal{A}^* (\mathcal{A}(\vL^k)-\vD+\vS) $ using Gauss-Newton\; 
		\If{$\|\vL^{k+1}-\vL^{k}\|_F/\|\vL^k\|_F < \epsilon$}{\textbf{break}} 
	}
	\caption{Proposed Algorithm}\label{Alg1}
\end{algorithm}

{\bf Connection to~\cite{Shen018}.} Consider the special case with $\mathcal{A}=\mathcal{I}$ and $\mu=0$. We let $t=1$ in \eqref{forward-backwardL} and obtain the following iteration  
\begin{align*}
\vL^{k+1}=\prox_{0}(\vL^k-f'_\lambda(\vL^k-\vD))=\argmin_{\vL:\rank(\vL)\leq p}{1\over2}\|\vL+\vS^{k+1}-\vD\|^2,
\end{align*}
where $\vS^{k+1}=\sign(\vD-\vL^k)\odot\max(0,|\vD-\vL^k|-\lambda)$. This is exactly the algorithm in~\cite{Shen018} for solving~\eqref{pro:L1L1U}. 
It alternates between finding the best $\vS$ with $\vL$ fixed and the best $\vL$ (or $(\vX,\vY)$) with $\vS$ fixed.

Recently, the work~\cite{cai2019accelerated} proposed a novel RPCA algorithm with linear convergence. It projects matrices to special manifolds of low-rank matrices, and their truncated SVD can be computed efficiently.  Our matrix does not have this property in our algorithm, and a good initial guess from the previous iteration is necessary to reduce the computation in the Gauss-Newton method. 

\subsubsection{Convergence analysis} 
From the discussion above, problem \eqref{pro:general} can be solved by an iteration process of forward-backward splitting. In each iteration, we reduce the value of the objective function
\begin{equation}\label{targetfunction}
E(\vL,\vS)=\frac{1}{2}\|\mathcal{A}(\vL)+\vS-\vD\|^2_F+\lambda\|\vS\|_1+\mu\|\vL\|_*
\end{equation}
by applying proximal operators to $\vL$ and $\vS$ alternatively. The resulting iteration sequence $\{(\vL^k,\vS^k)\}_{k\geq 1}$ with some initial $(\vL^0,\vS^0)$ is explicitly given by
\begin{align}\label{sequences}
\begin{aligned}
\vS^k&=\sign(\vD-\mathcal{A}(\vL^{k-1}))\odot\max(0,|\vD-\mathcal{A}(\vL^{k-1})|-\lambda),\\
\vL^k
&=\prox_{t\mu} \left(\vL^{k-1}-t\mathcal{A}^*(\mathcal{A}(\vL^{k-1})+\vS^{k}-\vD)\right),\\
\end{aligned}
\end{align}  where the proximal operator $\prox_{t\mu}(\cdot)$ for updating $\vL$ is defined by \eqref{prox}. Here we use \eqref{grad} to derive 
\begin{align*}
&f'_{\lambda}(\mathcal{A}(\vL^{k-1})-\vD)\\
&=\mathcal{A}(\vL^{k-1})-\vD+\sign(\vD-\mathcal{A}(\vL^{k-1}))\odot\max(0,|\vD-\mathcal{A}(\vL^{k-1})|-\lambda)\\
&=\mathcal{A}(\vL^{k-1})+\vS^{k}-\vD.
\end{align*}  In this subsection, we  establish the convergence results for $\{(\vL^k,\vS^k)\}_{k\geq 1}$.  We will show that every limit point of $\{(\vL^k,\vS^k)\}_{k\geq 1}$, denoted by $(\vL^\star,\vS^\star)$, is a fixed point of the proximal operator, i.e., 
\begin{align}\label{fixedpoints}
\begin{aligned}
&\vS^\star=\sign(\vD-\mathcal{A}(\vL^\star))\odot\max(0,|\vD-\mathcal{A}(\vL^\star)|-\lambda),\\
&\vL^\star=\prox_{t\mu} \left(\vL^\star-t\mathcal{A}^*(\mathcal{A}(\vL^\star)+\vS^\star-\vD)\right).
\end{aligned}
\end{align} 
In practical execution, one can efficiently solve the proximal operator for $\vL$ by solving $(\vX^k,\vY^k)$ through
\begin{align}\label{solvingXY}
\begin{aligned}
\Min_{\vX,\vY }~&\frac{1}{2} \|\vX\vY^\top-\vL^{k-1}+t\mathcal{A}^*(\mathcal{A}(\vL^{k-1})+\vS^{k}-\vD)\|_F^2 + \mu\|\vX\|_*,\\
~\St~& \vY^\top\vY=\vI_{p\times p},
\end{aligned}
\end{align} 
and letting $\vL^k=\vX^k(\vY^k)^\top$. 
We also prove that if $(\vX^\star,\vY^\star,\vS^\star)$ is a limit point of $\{(\vX^k, \vY^k,\vS^k)\}_{k\geq 1}$, then $(\vX^\star(\vY^\star)^\top,\vS^\star)$ is a limit point of $\{(\vL^k,\vS^k)\}_{k\geq 1}$, and the limit point $(\vX^\star,\vY^\star,\vS^\star)$ is a stationary point of 
$$E(\vX\vY^\top,\vS)=\frac{1}{2}\|\mathcal{A}(\vX\vY^\top)+\vS-\vD\|^2_F+\lambda\|\vS\|_1+\mu\|\vX\vY^\top\|_*,$$ i.e., $(\vX^\star,\vY^\star,\vS^\star)$ satisfies the first-order optimality condition
\begin{align}\label{optimalitycondition}
\begin{aligned}
&\vzero \in [\mathcal{A}^*(\mathcal{A}(\vX^\star(\vY^\star)^\top)+\vS^\star-\vD)+\mu\partial \|\vX^\star(\vY^\star)^\top\|_* ]\vY^\star,\\
&\vzero \in (\vX^\star)^\top[\mathcal{A}^*(\mathcal{A}(\vX^\star(\vY^\star)^\top)+\vS^\star-\vD)+\mu\partial \|\vX^\star(\vY^\star)^\top\|_*],\\
&\vzero \in \mathcal{A}(\vX^\star(\vY^\star)^\top)+\vS^\star-\vD +\lambda \partial \|\vS^\star\|_1.\\
\end{aligned}
\end{align} We summarize these results in the following theorem.
\begin{theorem}\label{thm:convergence} 
	Define the objective function $E(\vL,\vS)$ as \eqref{targetfunction}. Let  $\{(\vL^k,\vS^k)\}_{k\geq 1}$ be a sequence generated by \eqref{sequences} with initial $(\vL^0,\vS^0)$ and stepsize $t<\frac{1}{\|\mathcal{A}\|^2}$, where $\vL^k=\vX^k(\vY^k)^\top$ with $(\vX^k, \vY^k)$ being solved from \eqref{solvingXY}. We have the following statements:
	\begin{enumerate}
		\item  The objective values $\{E(\vL^k,\vS^k)\}_{k\geq 1}$ are non-increasing along $\{(\vL^k,\vS^k)\}_{k\geq 1}$.
		\item The sequence $\{(\vL^k,\vS^k)\}_{k\geq 1}$ is bounded and thus has limit points.
		\item Every limit point $(\vL^\star,\vS^\star)$ of $\{(\vL^k,\vS^k)\}_{k\geq 1}$ satisfies \eqref{fixedpoints}.
		\item The sequence $\{(\vX^k,\vY^k,\vS^k)\}_{k\geq 1}$ is also bounded. In addition, for any limit point $(\vX^\star,\vY^\star,\vS^\star)$ of $\{(\vX^k,\vY^k,\vS^k)\}_{k\geq 1}$, $(\vX^\star(\vY^\star)^\top,\vS^\star)$ is a limit point of $\{(\vL^k,\vS^k)\}_{k\geq 1}$.
		\item Every limit point $(\vX^\star,\vY^\star,\vS^\star)$ of $\{(\vX^k,\vY^k,\vS^k)\}_{k\geq 1}$ is a stationary point of $E(\vX\vY^\top,\vS)$, which satisfies the first-order optimality condition in \eqref{optimalitycondition}.
	\end{enumerate} In addition, if $\mathcal{A}=\mathcal{I}$, we can take the stepsize $t=1$, and all the statements above still hold.
\end{theorem}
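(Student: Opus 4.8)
I would read \eqref{sequences} as a forward--backward / two-block scheme for $E$ and run the standard sufficient-decrease argument, then extract boundedness, subsequential convergence, and stationarity. For statement (1): $\vS^k$ is the \emph{exact} minimizer of the $1$-strongly convex map $\vS\mapsto E(\vL^{k-1},\vS)$, so $E(\vL^{k-1},\vS^{k-1})-E(\vL^{k-1},\vS^{k})\ge\tfrac12\|\vS^k-\vS^{k-1}\|_F^2$. Writing $g(\vL):=\tfrac12\|\mathcal{A}(\vL)+\vS^k-\vD\|_F^2$, whose gradient $\mathcal{A}^*(\mathcal{A}(\vL)+\vS^k-\vD)$ is $\|\mathcal{A}\|^2$-Lipschitz, the $\vL$-update is the proximal step $\vL^k=\prox_{t\mu}(\vL^{k-1}-t\nabla g(\vL^{k-1}))$; comparing the proximal objective at $\vL^k$ with its value at the feasible point $\vL^{k-1}$, expanding the square, and using the descent lemma for $g$ yields $E(\vL^{k-1},\vS^{k})-E(\vL^{k},\vS^{k})\ge(\tfrac1{2t}-\tfrac{\|\mathcal{A}\|^2}{2})\|\vL^k-\vL^{k-1}\|_F^2$, with a positive coefficient since $t<1/\|\mathcal{A}\|^2$. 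Adding the two inequalities gives (1); since $E\ge 0$, telescoping also gives $\sum_k(\|\vL^k-\vL^{k-1}\|_F^2+\|\vS^k-\vS^{k-1}\|_F^2)<\infty$, hence $\|\vL^k-\vL^{k-1}\|_F\to0$ and $\|\vS^k-\vS^{k-1}\|_F\to0$.

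\textbf{Statements (2)--(4).} From (1), $\mu\|\vL^k\|_*\le E(\vL^0,\vS^0)$ and $\lambda\|\vS^k\|_1\le E(\vL^0,\vS^0)$, so $\{(\vL^k,\vS^k)\}$ is bounded and has limit points, which is (2). For (3), fix a subsequence $(\vL^{k_j},\vS^{k_j})\to(\vL^\star,\vS^\star)$; by the previous step $(\vL^{k_j-1},\vS^{k_j-1})\to(\vL^\star,\vS^\star)$ as well. Since the $\vS$-update is soft-thresholding (a $1$-Lipschitz map), passing to the limit gives the first line of \eqref{fixedpoints}. For the second line I would prove that the set-valued map $\prox_{t\mu}$ is closed: if $\vA_j\to\vA$, $\vB_j\in\prox_{t\mu}(\vA_j)$, and $\vB_j\to\vB$, then $\rank(\vB)\le p$ (lower semicontinuity of $\rank$ via continuity of singular values) and the defining inequality of $\prox_{t\mu}$ passes to the limit by continuity of $\|\cdot\|_F$ and $\|\cdot\|_*$, so $\vB\in\prox_{t\mu}(\vA)$; applying this along $k_j$ gives the second line of \eqref{fixedpoints}. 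For (4): each $\vY^k$ has orthonormal columns, so $\|\vY^k\|_F=\sqrt p$ and $\|\vX^k\|_F=\|\vX^k(\vY^k)^\top\|_F=\|\vL^k\|_F$ is bounded; continuity of matrix multiplication then sends any limit point $(\vX^\star,\vY^\star,\vS^\star)$ to the limit point $(\vX^\star(\vY^\star)^\top,\vS^\star)$ of $\{(\vL^k,\vS^k)\}$, with $(\vY^\star)^\top\vY^\star=\vI_{p\times p}$ by continuity.

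\textbf{Statement (5).} The first line of \eqref{fixedpoints} is exactly the first-order condition of the soft-thresholding problem, which is the third inclusion in \eqref{optimalitycondition}. For the first two I would use that, by (3)--(4), $\vL^\star$ solves $\min_{\rank(\vL)\le p}\tfrac12\|\vL-\vG^\star\|_F^2+t\mu\|\vL\|_*$ with $\vG^\star:=\vL^\star-t\mathcal{A}^*(\mathcal{A}(\vL^\star)+\vS^\star-\vD)$. Setting $\vW^\star:=\mathcal{A}^*(\mathcal{A}(\vL^\star)+\vS^\star-\vD)=\tfrac1t(\vL^\star-\vG^\star)$ and substituting the explicit singular-value-thresholding form of the minimizer from Theorem~\ref{thm:proximal}, one can describe the singular subspaces of $\vW^\star$ relative to those of $\vL^\star$ and construct a single $\vZ^\star\in\partial\|\vL^\star\|_*$ satisfying simultaneously $(\vW^\star+\mu\vZ^\star)\vY^\star=\vzero$ and $(\vX^\star)^\top(\vW^\star+\mu\vZ^\star)=\vzero$; rewriting $\partial\|\vX^\star(\vY^\star)^\top\|_*\,\vY^\star=\partial\|\vX^\star\|_*$ (valid because $\vY^\star$ has orthonormal columns, as in the proof of Theorem~\ref{Thm2.1}) turns these into the first two inclusions of \eqref{optimalitycondition}. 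Equivalently, one may pass the optimality of $(\vX^k,\vY^k)$ for \eqref{solvingXY} to the limit with the same closedness bookkeeping and then write first-order conditions in $\vX$ (a convex subproblem with $\vY^\star$ fixed) and in $\vY$ over the Stiefel manifold $\{\vY^\top\vY=\vI_{p\times p}\}$.

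\textbf{The case $\mathcal{A}=\mathcal{I}$, $t=1$, and the main obstacle.} When $\mathcal{A}=\mathcal{I}$ and $t=1$ the $\vL$-step collapses to $\vL^k=\prox_{\mu}(\vD-\vS^k)$, depending on $\vL^{k-1}$ only through $\vS^k$. The $\vL$-descent inequality now has coefficient $0$, so $\|\vL^k-\vL^{k-1}\|_F\to0$ can no longer be read off from the first step; but $\|\vS^k-\vS^{k-1}\|_F\to0$ still holds ($1$-strong convexity of the $\vS$-subproblem together with the fact that the $\vL$-step still does not increase $E$), and since $\vL^{k_j}=\prox_\mu(\vD-\vS^{k_j})$ with $\vS^{k_j}\to\vS^\star$ the closedness argument of (3) goes through unchanged; (2), (4), (5) do not use the stepsize bound. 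I expect the main obstacle to be (5): verifying that one and the same subgradient of $\|\vL^\star\|_*$ satisfies both matrix inclusions, particularly when $\vL^\star$ is rank deficient (so that the column space of $\vY^\star$ strictly contains the row space of $\vL^\star$), where the spectral-norm bound $\|\vZ^\star\|\le1$ must be checked against the discarded singular values of $\vG^\star$ --- this is precisely where the explicit structure in Theorem~\ref{thm:proximal} is needed. The closedness of the nonconvex $\prox_{t\mu}$ used in (3) is a secondary technical point but is straightforward given lower semicontinuity of $\rank$.
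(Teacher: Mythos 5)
Your proposal is correct and follows essentially the same route as the paper: sufficient decrease from the $1$-strong convexity of the $\vS$-subproblem plus a prox-comparison/descent-lemma bound for the $\vL$-step yielding the coefficient $\tfrac{1}{2t}-\tfrac{\|\mathcal{A}\|^2}{2}$, coercivity for boundedness, vanishing successive differences to pass to the limit in \eqref{sequences}, and first-order conditions on the limiting prox subproblem in $(\vX,\vY)$ for stationarity, with the same observation that for $\mathcal{A}=\mathcal{I}$, $t=1$ the $\vL$-update depends only on $\vS^k$ so that $\|\vS^{k+1}-\vS^k\|_F\to 0$ suffices. Your explicit closedness argument for the set-valued $\prox_{t\mu}$ is a detail the paper passes over silently, and your worry in (5) about a single common subgradient is unnecessary since the two inclusions in \eqref{optimalitycondition} are stated independently (the paper derives them separately from optimality in $\vX$ and in $\vY$, as in your alternative route).
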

\begin{proof}
	We start by verifying the first two statements. For $k\geq 0$ and $t< \frac{1}{\|\mathcal{A}\|^2}$, we have
	\begin{align}\label{e17}
	\begin{aligned}
	&E(\vL^{k+1},\vS^{k+1})\\ 
	&=\frac{1}{2}\|\mathcal{A}(\vL^{k+1})-\mathcal{A}(\vL^k)\|^2_F+\langle \mathcal{A}(\vL^{k+1})-\mathcal{A}(\vL^k), \mathcal{A}(\vL^k)+\vS^{k+1}-\vD\rangle\\
	&\quad+\frac{1}{2}\|\mathcal{A}(\vL^k)+\vS^{k+1}-\vD\|^2_F+\lambda\|\vS^{k+1}\|_1+\mu\|\vL^{k+1}\|_*\\
	&\leq \frac{1}{2t}\|\vL^{k+1}-\vL^k\|^2_F + \langle \vL^{k+1}-\vL^k, \mathcal{A}^*f'_{\lambda}(\mathcal{A}(\vL^k)-\vD)\rangle+\mu\|\vL^{k+1}\|_*\\
	&\quad +\frac{1}{2}\|\mathcal{A}(\vL^k)+\vS^{k+1}-\vD\|^2_F+\lambda\|\vS^{k+1}\|_1
	+\left(\frac{\|\mathcal{A}\|^2}{2}-\frac{1}{2t}\right)\|\vL^{k+1}-\vL^k\|^2_F\\
	&=\frac{1}{t}\left\{\frac{1}{2}\|\vL^{k+1}-\vL^k+t\mathcal{A}^*f'_{\lambda}(\mathcal{A}(\vL^k)-\vD)\|^2_F+t\mu\|\vL^{k+1}\|_*\right\}\\
	&\quad-\frac{t}{2}\|\mathcal{A}^*f'_{\lambda}(\mathcal{A}(\vL^k)-\vD)\|^2_F+\left(\frac{\|\mathcal{A}\|^2}{2}-\frac{1}{2t}\right)\|\vL^{k+1}-\vL^k\|^2_F\\
	&\quad+\frac{1}{2}\|\mathcal{A}(\vL^k)+\vS^{k+1}-\vD\|^2_F+\lambda\|\vS^{k+1}\|_1,
	\end{aligned}
	\end{align} where the inequality is due to the facts that
	\begin{equation*}
	\|\mathcal{A}(\vL^{k+1})-\mathcal{A}(\vL^k)\|^2_F\leq \|\mathcal{A}\|^2\|\vL^{k+1}-\vL^k\|^2_F
	\end{equation*} and 
	\begin{equation*}
	\mathcal{A}(\vL^k)+\vS^{k+1}-\vD=f'_{\lambda}(\mathcal{A}(\vL^{k})-\vD).
	\end{equation*} Note that $\vL^{k+1}=\prox_{t\mu} \left(\vL^k-t\mathcal{A}^*
	f'_\lambda(\mathcal{A}(\vL^k)-\vD)\right)$, which solves 
	\begin{equation*}
	\Min_{\vL:\mbox{rank}(\vL)\leq p}~\frac{1}{2}\|\vL-\vL^k+t\mathcal{A}^*f'_{\lambda}(\mathcal{A}(\vL^k)-\vD)\|^2_F+t\mu\|\vL\|_*.
	\end{equation*} Since $\rank(\vL^{k}) \leq p$, we have
	\begin{align*}
	&\frac{1}{2}\|\vL^{k+1}-\vL^k+t\mathcal{A}^*f'_{\lambda}(\mathcal{A}(\vL^k)-\vD)\|^2_F+t\mu\|\vL^{k+1}\|_*\\
	&\leq \frac{1}{2}\|\vL^{k}-\vL^k+t\mathcal{A}^*f'_{\lambda}(\mathcal{A}(\vL^k)-\vD)\|^2_F+t\mu\|\vL^{k}\|_*\\
	&=\frac{t^2}{2}\|\mathcal{A}^*f'_{\lambda}(\mathcal{A}(\vL^k)-\vD)\|^2_F+t\mu \|\vL^k\|_*.
	\end{align*} 
	Substituting the above estimate to \eqref{e17} yields 
	\begin{align}\label{e18}
	\begin{aligned}
	&E(\vL^{k+1},\vS^{k+1})\leq \left(\frac{\|\mathcal{A}\|^2}{2}-\frac{1}{2t}\right)\|\vL^{k+1}-\vL^k\|^2_F\\
	&\quad +\frac{1}{2}\|\mathcal{A}(\vL^k)+\vS^{k+1}-\vD\|^2_F+\mu\|\vL^{k}\|_*+\lambda\|\vS^{k+1}\|_1.
	\end{aligned}
	\end{align}
	Moreover, we see that 
	\begin{equation*}
	\vS^{k+1}=\argmin_\vS~\frac{1}{2}\|\vS-(\vD-\mathcal{A}(\vL^k))\|^2_F+\lambda\|\vS\|_1.
	\end{equation*} Then from~\cite[Lemma 2]{lou2018fast}, there holds 
	\begin{align}\label{e19}
	\begin{aligned}
	&\frac{1}{2}\|\vS^{k+1}-(\vD-\mathcal{A}(\vL^k))\|^2_F+\lambda\|\vS^{k+1}\|_1\\
	&\leq \frac{1}{2}\|\vS^{k}-(\vD-\mathcal{A}(\vL^k))\|^2_F+\lambda\|\vS^{k}\|_1-\frac{1}{2}\|\vS^{k+1}-\vS^{k}\|^2_F.
	\end{aligned}
	\end{align} Combining estimates \eqref{e18} and \eqref{e19}, we find that
	\begin{equation}\label{e20}
	E(\vL^{k+1},\vS^{k+1})\leq E(\vL^{k},\vS^{k})+\left(\frac{\|\mathcal{A}\|^2}{2}-\frac{1}{2t}\right)\|\vL^{k+1}-\vL^k\|^2_F-\frac{1}{2}\|\vS^{k+1}-\vS^{k}\|^2_F.
	\end{equation} Since $\frac{\|\mathcal{A}\|^2}{2}-\frac{1}{2t}<0$, the estimate above implies $E(\vL^{k+1},\vS^{k+1})\leq E(\vL^{k},\vS^{k})$ for any $k\geq 0$, which verifies the first statement.  
	
	Note that the target function $E(\vL,\vS)$ is coercive, i.e., $E(\vL,\vS)\to +\infty$ when $\|\vL\|_F+\|\vS\|_F \to +\infty$. Since $E(\vL^{k},\vS^{k})\leq E(\vL^{0},\vS^{0})<+\infty, \forall k\geq 1$, this property guarantees that both $\{\vL^k\}_{k\geq 1}$ and $\{\vS^{k}\}_{k\geq 1}$ are bounded sequences, and thus the second statement holds. 
	
	For any limit point $(\vL^\star,\vS^\star)$ of $\{(\vL^k,\vS^k)\}_{k\geq 1}$, there exists a convergent subsequence $\{(\vL^{k_i},\vS^{k_i})\}_{i\geq 1}$ such that $\vL^{k_i}\to \vL^\star$ and $\vS^{k_i}\to \vS^\star$. On the other hand, we see that 
	\begin{align}\label{e21}
	\begin{aligned}
	\vS^{k_i+1}&=\sign(\vD-\mathcal{A}(\vL^{k_i}))\odot\max(0,|\vD-\mathcal{A}(\vL^{k_i})|-\lambda),\\
	\vL^{k_i+1}&=\prox_{t\mu} \left(\vL^{k_i}-t\mathcal{A}^*(\mathcal{A}(\vL^{k_i})+\vS^{k_i+1}-\vD)\right).
	\end{aligned}
	\end{align} 
	Summing both sides of \eqref{e20} from $k=0$ to $\infty$, we obtain
	\begin{equation*}
	\left(\frac{1}{t}-\|\mathcal{A}\|^2\right)\sum_{k=0}^{\infty}\|\vL^{k+1}-\vL^k\|^2_F+\sum_{k=0}^{\infty}\|\vS^{k+1}-\vS^{k}\|^2_F\leq 2 E(\vL^{0},\vS^{0}) <\infty.
	\end{equation*}
	This inequality guarantees that $\{\vS^{k_i+1}\}_{i\geq 1}$ has the same limit point $\vS^\star$ as that of $\{\vS^{k_i}\}_{i\geq 1}$, and $\{\vL^{k_i+1}\}_{i\geq 1}$ has the same limit point $\vL^\star$ as that of $\{\vL^{k_i}\}_{i \geq 1}$. Then by taking limits in both sides of the two equations in \eqref{e21}, we obtain the third statement. 
	
	Next we will prove the last two statements. As $\|\vX^k\|^2_F=\|\vL^k\|^2_F$ and $\|\vY^k\|^2_F=p$, we know that the sequence $\{(\vX^k,\vY^k,\vS^k)\}_{k\geq 1}$ is also bounded. Let $(\vX^\star,\vY^\star,\vS^\star)$ be a limit point of $\{(\vX^k,\vY^k,\vS^k)\}_{k\geq 1}$, which is the limitation of a subsequence $\{(\vX^{k_i},\vY^{k_i},\vS^{k_i})\}_{i\geq 1}$. Then we have
	$$\vL^{k_i}=\vX^{k_i}(\vY^{k_i})^\top \to \vX^\star(\vY^\star)^\top \mbox{ and } \vS^{k_i} \to \vS^\star,$$ i.e., $(\vX^\star(\vY^\star)^\top, \vS^\star)$ is the limit point of  $\{(\vL^k,\vS^k)\}_{k\geq 1}$ achieved by the subsequence $\{(\vL^{k_i},\vS^{k_i})\}_{i\geq 1}$. Thus the fourth statement is verified.
	
	Now we are in the position to prove the fifth statement. Due to the third and fourth statements, if $(\vX^\star,\vY^\star,\vS^\star)$ is a limit point of $\{(\vX^k,\vY^k,\vS^k)\}_{k\geq 1}$, i.e., $(\vX^\star(\vY^\star)^\top, \vS^\star)$ should satisfy \eqref{fixedpoints} 
	\begin{align}\label{e26}
	\begin{aligned}
	&\vS^\star=\sign(\vD-\mathcal{A}(\vX^\star(\vY^\star)^\top))\odot\max(0,|\vD-\mathcal{A}(\vX^\star(\vY^\star)^\top)|-\lambda),\\
	&\vX^\star(\vY^\star)^\top=\prox_{t\mu} \left(\vX^\star(\vY^\star)^\top-t\mathcal{A}^*(\mathcal{A}(\vX^\star(\vY^\star)^\top)+\vS^\star-\vD)\right).
	\end{aligned}
	\end{align} 
	The first condition in \eqref{e26} implies that the limit point $\vS^\star$ minimizes 
	$$\frac{1}{2}\|\mathcal{A}(\vX^\star(\vY^\star)^\top)+\vS-\vD\|^2_F+\lambda\|\vS\|_1+\mu\|\vX^\star(\vY^\star)^\top\|_*$$ over all $\vS \in \RR^{m\times n}$. Thus, $\vS^\star$ should satisfy the third condition in \eqref{optimalitycondition}.
	
	Moreover, since $\rank(\vX^\star(\vY^\star)^\top) \leq p$, the second condition in \eqref{e26} actually implies that $(\vX^\star,\vY^\star)$ is an optimal solution of the problem
	\begin{align*}
	&\Min_{\vX,\vY }~\frac{1}{2} \|\vX\vY^\top-\vX^\star(\vY^\star)^\top+t\mathcal{A}^*(\mathcal{A}(\vX^\star(\vY^\star)^\top)+\vS^\star-\vD)\|_F^2 + t\mu\|\vX\vY^\top\|_*. 
	\end{align*}Therefore, $(\vX^\star,\vY^\star)$ should satisfy the first-order optimality condition for $\vX$, which gives 
	\begin{align*}
	&[\vX^\star(\vY^\star)^\top-\vX^\star(\vY^\star)^\top+t\mathcal{A}^*(\mathcal{A}(\vX^\star(\vY^\star)^\top)+\vS^\star-\vD)]\vY^{\star} \\
	&+t\mu \partial \|\vX^\star(\vX^\star(\vY^\star)^\top)^\top\|_* \vY^\star\\
	=&t [\mathcal{A}^*(\mathcal{A}(\vX^\star(\vY^\star)^\top)+\vS^\star-\vD)+\mu\partial \|\vX^\star(\vY^\star)^\top\|_* ]\vY^\star\ni \vzero. 
	\end{align*}
	Similarly, from the first-order opitmality condition for $\vY$, one can verify that 
	$$\vzero \in (\vX^\star)^\top[\mathcal{A}^*(\mathcal{A}(\vX^\star(\vY^\star)^\top)+\vS^\star-\vD)+\mu\partial \|\vX^\star(\vY^\star)^\top\|_* ].$$ We thus derive the first two conditions in \eqref{optimalitycondition}.
	
	We will complete our proof by verifying the convergence results for the special case of $\mathcal{A}=\mathcal{I}$ and $t=1$. In this case, by the same method, one can derive a similar inequality as~\eqref{e20}, which is 
	\begin{equation*}
	E(\vL^{k+1},\vS^{k+1})\leq E(\vL^{k},\vS^{k})-\frac{1}{2}\|\vS^{k+1}-\vS^{k}\|^2_F.
	\end{equation*} Then $\{E(\vL^k,\vS^k)\}_{k\geq 1}$ are non-increasing along $\{(\vL^k,\vS^k)\}_{k\geq 1}$, and $\{(\vL^k,\vS^k)\}_{k\geq 1}$ is bounded due to the coerciveness of $E(\vL,\vS)$. Let $(\vL^\star, \vS^\star)$ be the limit point of  $\{(\vL^k,\vS^k)\}_{k\geq 1}$ achieved by the subsequence $\{(\vL^{k_i},\vS^{k_i})\}_{i\geq 1}$. Recall the iterations for updating  $\vS^{k_i+1}$ and $\vL^{k_i}$ given by
	\begin{align}\label{updating}
	\begin{aligned}
	\vS^{k_i+1}&=\sign(\vD-\vL^{k_i})\odot\max(0,|\vD-\vL^{k_i}|-\lambda),\\
	\vL^{k_i}&=\prox_{\mu} \left(\vD-\vS^{k_i}\right).
	\end{aligned}
	\end{align} Since $\sum_{k=0}^{\infty}\|\vS^{k+1}-\vS^{k}\|^2_F\leq 2 E(\vL^{0},\vS^{0}) <+\infty$, $\{\vS^{k_i+1}\}_{i\geq 1}$ has the same limit point $\vS^\star$ as that of $\{\vS^{k_i}\}_{i\geq 1}$. Taking limits in both sides of equations \eqref{updating} yields the condition~\eqref{fixedpoints} for $\mathcal{A}=\mathcal{I}$ and $t=1$. The last two statements can be verified by exactly the same arguments for the general case. We thus complete the proof.
\end{proof}

\subsection{An accelerated algorithm}
We show in the previous subsection that Alg.~\ref{Alg1} is a forward-backward splitting or proximal gradient algorithm for a nonconvex problem. Recently, accelerated proximal gradient (APG) algorithms are proposed for nonconvex problems to reduce the computational time without sacrificing convergence~\cite{li2015global,li2015accelerated}. In this paper, we adopt the nonmonotone APG~\cite[Alg. 2]{li2015accelerated} because of its better performance shown in~\cite{li2015accelerated}. The algorithm is described in Alg.~\ref{Alg2}. We let $\delta=1$ and $\eta=0.6$ in the numerical experiments.

\begin{algorithm}[H]
	\SetAlgoLined
	\SetAlgoLined
	\KwIn{$\vD$, $\mu$, $\lambda$, $p$, $\mathcal{A}$, stepsize $t$, $\eta \in [0,1)$, $\delta > 0$, stopping criteria $\epsilon$, maximum number of iterations $Max\_Iter$, initialization: $\vL^0 = \vL^1 = \vZ^1 = \textbf{0}$, $t^0 = 0$, $t^1 = q^1=1$,  $c^1 = F(\vL^1)$}
	\KwOut{$\vL$, $\vS$}
	\For{k = 1, 2, 3, .., Max\_Iter}{
		$\vL = \vL^k + \frac{t^{k-1}}{t^k}(\vZ^k - \vL^k) +\frac{t^{k-1}-1}{t^k}(\vL^k - \vL^{k-1})$\; 
		$\vS = \sign(\vD-\cA(\vL))\odot\max(0,|\vD-\cA(\vL)|-\lambda)$\;
		$\vZ^{k+1} = \prox_{t\mu}(\vL - t\mathcal{A}^*(\mathcal{A}(\vL) - \vD + \vS))$\;
		\eIf{$F(\vZ^{k+1}) \leq c^k - \delta \|\vZ^{k+1} - \vL\|^2$}{
			$\vL^{k+1} = \vZ^{k+1}$\;
		}{
			$\vS^k = \sign(\vD-\cA(\vL^k))\odot\max(0,|\vD-\cA(\vL^k)|-\lambda)$\;
			$\vV^{k+1} = \prox_{t\mu}(\vL^k - t\mathcal{A}^*(\mathcal{A}(\vL^k) - \vD + \vS^k))$\;
			$\vL^{k+1}=
			\begin{cases}
			\vZ^{k+1}& \textbf{if}~ F(\vZ^{k+1}) \leq F(\vV^{k+1});\\
			\vV^{k+1}& \textbf{otherwise};
			\end{cases}$\
		}
		\If{$\|\vL^{k}-\vL^{k-1}\|_F/\|\vL^{k-1}\|_F < \epsilon$}{\textbf{break}} 
		$t^{k+1} = \frac{\sqrt{4(t^k)^2+1} + 1}{2}$\;
		$q^{k+1} = \eta q^k + 1$\;
		$c^{k+1} = \frac{\eta q^k c^k + F(\vL^{k+1})}{q^{k+1}}$\;
	}
	\caption{Accelerated algorithm with nonmonotone APG}\label{Alg2}
\end{algorithm}

\section{Numerical experiments}\label{sec:num}
In this section, we use synthetic data and real images to demonstrate the performance of our proposed model and algorithms. The code to reproduce the results in this section can be found at \url{https://github.com/mingyan08/RPCA_Rank_Bound}.

\subsection{Synthetic data}
We would like to recover the low-rank matrix from a noisy matrix that is contaminated by a sparse matrix and Gaussian noise. We create a true low-rank $500 \times 500$ matrix $\vL^\star$ by multiplying a random $500\times r$ matrix and a random $r\times 500$ matrix, where their components are generated from standard normal distribution independently. We calculate the mean of the absolute values of all the components in $\vL^\star$ and denote it as $c$. Then we randomly select $s\%$ of the components and replace their values with uniformly distributed random values from $[-3c, 3c]$. After that, we add small Gaussian noise $\mathcal{N} (0, \sigma^2)$ to all components of the matrix. We let $t=1.7$ in the experiments because of fast convergence, though the convergence results in Theorem~\ref{thm:convergence} require $t<1$.

\subsubsection{Low-rank matrix recovery}
We fix $\sigma = 0.05$ for the Gaussian noise and set the upper bound of the rank to be $p=r+5$. We stop all algorithms when the relative error at the $k$-th iteration, which is defined as
\begin{equation*}
RE(\vL^{k+1},\vL^k) := \frac{\|\vL^{k+1} - \vL^k\|_F}{\|\vL^k\|_F},
\end{equation*}
is less than $10^{-4}$. We use the relative error to $\vL^\star$, which is defined as
\begin{equation*}
RE(\vL,\vL^*) := \frac{\|\vL - \vL^\star\|_F}{\|\vL^\star\|_F},
\end{equation*}
to evaluate the performance of our proposed model and that in~\cite{Shen018}. First, we consider the case with $r=25$ and $s=20$. We plot a contour map of the relative error to $\vL^\star$ for different parameters $\mu$ and $\lambda$ in Fig.~\ref{fig:contour}. From this contour map, we can see that the best parameter does not happen when $\mu=0$, which corresponds to the model in~\cite{Shen018}. It verifies the better performance of our proposed model with appropriate parameters. In this subsection, we set $\lambda=0.02$ for Shen et al.'s and $(\mu=0.6,~\lambda=0.04)$ for our proposed algorithms. 

\begin{figure}[!ht]
	\includegraphics[width=0.7\textwidth]{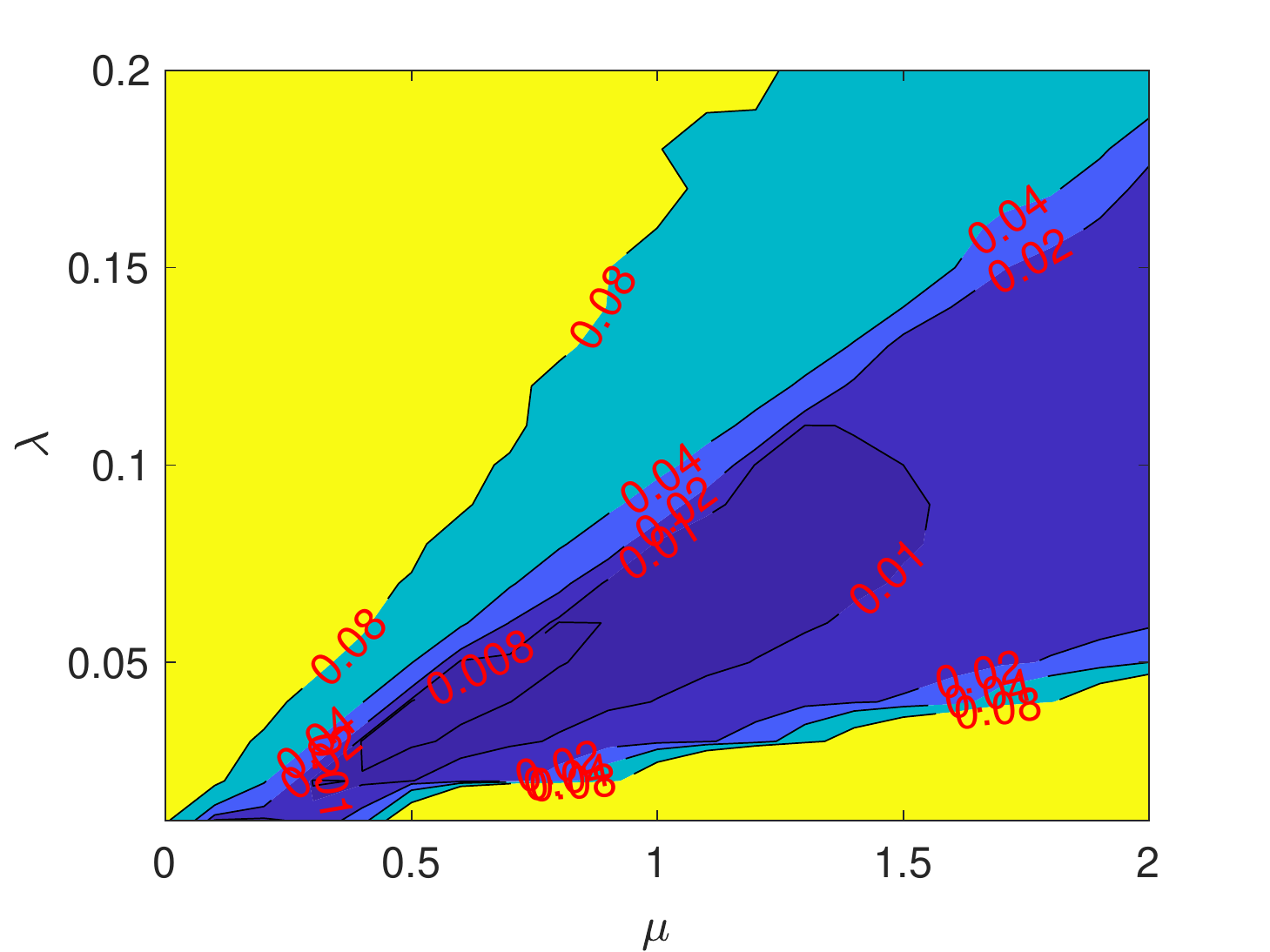}
	\caption{The contour map of the relative error to $\vL^\star$ for different parameters. In this experiment, we set $r=25$ and $s=20$. The upper bound of the rank is set to be $p=30$.}\label{fig:contour}
\end{figure}

In addition, we consider another two settings for $(r,s)$, and the comparison with different algorithms is shown in Table~\ref{tab}. In this table, we also compare the number of iterations for three algorithms: Shen et al.'s, Alg.~\ref{Alg1}, and Alg.~\ref{Alg2}. From this table, we can see that both Alg.~\ref{Alg1} and Alg.~\ref{Alg2} have better performance and fewer iterations than~\cite{Shen018}. The accelerated Alg.~\ref{Alg2} has the fewest iterations, but its performance in terms of $RE(\vL,\vL^\star)$ is not as good as Alg.~\ref{Alg1} for the last case. It is because we stop both algorithms when the stopping criteria is satisfied, and the algorithms are not converged yet. We checked the objective function values for both algorithms, and the value for Alg.~\ref{Alg2} is smaller than that for Alg.~\ref{Alg1} in this case. Therefore, if we want a solution close to the true low-rank matrix $\vL^\star$, we may need to stop early before the convergence, which is the same as many models for inverse problems.

\begin{table}[!ht]
	\begin{center}
		\begin{tabular}{|l|c|c|c|c|c|c|c|c|}
			\hline
			\multirow{2}{*}{$r$} &\multirow{2}{*}{s} & \multicolumn{2}{c|}{Shen et al.'s \cite{Shen018}}& \multicolumn{2}{c|}{Alg. 1}& \multicolumn{2}{c|}{Alg.2} \\\cline{3-8}
			&  & $RE(\vL,\vL^\star)$  & $\#$ iter & $RE(\vL,\vL^\star)$ & $\#$ iter & $RE(\vL,\vL^\star)$ & $\#$ iter\\
			\hline
			25 & 20 & 0.0745 & 1318 & 0.0075 & 296 & 0.0075 & 68\\
			\hline
			50  & 20 & 0.0496 & 1434 & 0.0101 & 473 & 0.0088 & 77\\
			\hline
			25 & 40 & 0.0990 & 2443 & 0.0635 & 796 & 0.0915 & 187\\
			\hline
		\end{tabular}
	\end{center}
	\caption{Comparison of three RPCA algorithms. We compare the relative error of their solutions to the true low-rank matrix and the number of iterations. Both Alg.~\ref{Alg1} and Alg.~\ref{Alg2} have better performance than~\cite{Shen018} in terms of the relative error and the number of iterations. Alg.~\ref{Alg2} has the fewest iterations but the relative error could be large. It is because the true low-rank matrix is not the optimal solution to the optimization problem, and the trajectory of the iterations moves close to $\vL^\star$ before it approaches the optimal solution.}\label{tab}
\end{table}

\subsubsection{Robustness of the model}\label{sec:robust}
In this experiment, we compare the robustness of our proposed model with that of~\cite{Shen018}. We let $r=25$ and $s=20$. Then we run both models for $p$ from 15 to 35. The comparison of the relative error to $\vL^\star$ is shown in Fig.~\ref{fig:robust}. We let $\lambda=0.02$ for Shen et al.'s and ($\mu=0.6,~\lambda=0.04$) for Alg.~\ref{Alg2}. It shows that our proposed model is robust to the parameter $p$, as long as it is not smaller than the true rank $r$. 
\begin{figure}[!ht]
	\centering
	\includegraphics[width=0.7\textwidth]{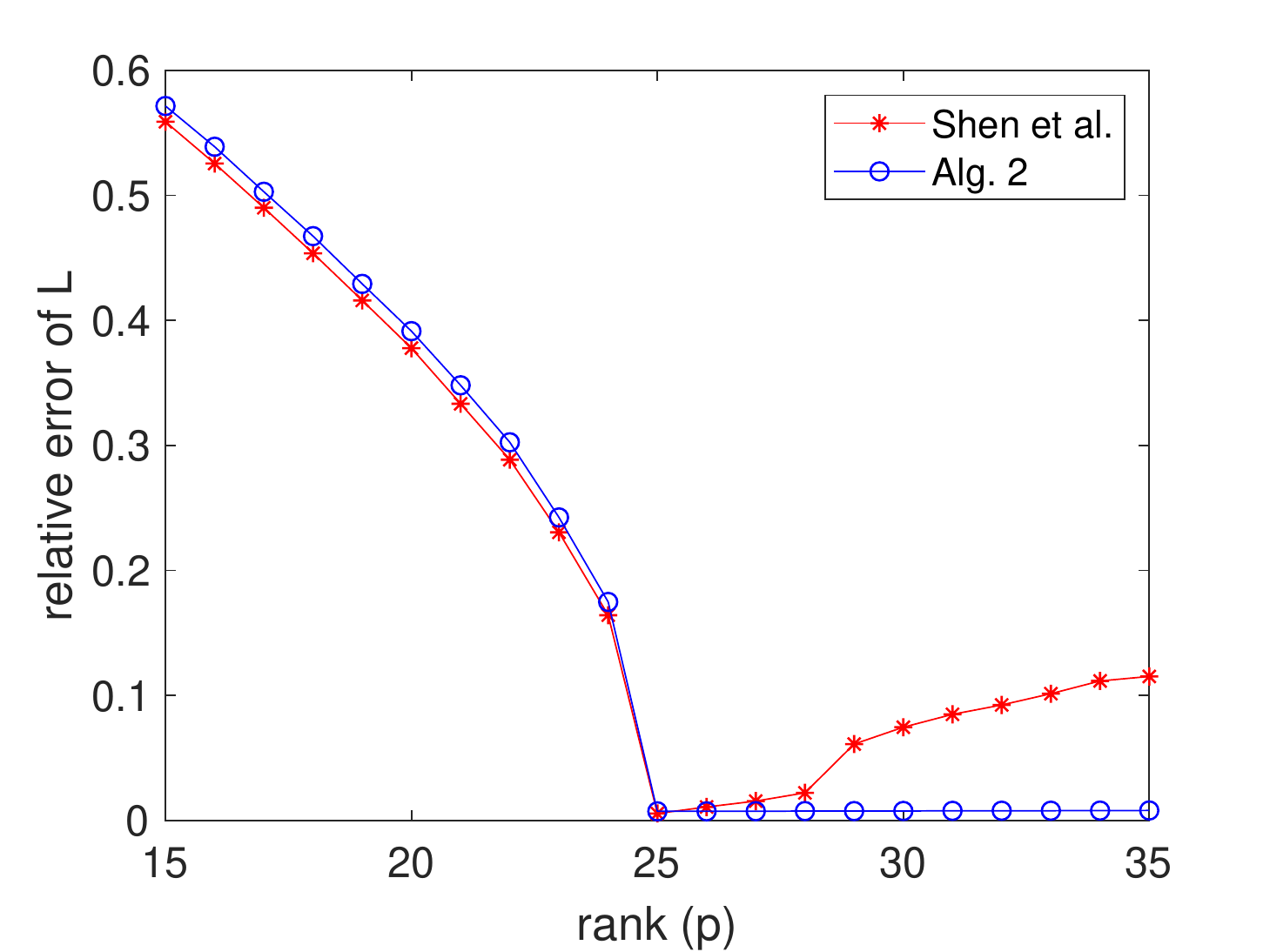}
	\caption{The relative error to the true low-rank matrix vs the rank $p$ for Shen et al.'s and Alg.~\ref{Alg2}. Alg.~\ref{Alg2} is robust to $p$, as long as $p$ is not smaller than the true rank 25.} \label{fig:robust}
\end{figure}

\subsubsection{Low-rank matrix recovery with missing entries} 
In this experiment, we try to recover the low-rank matrix when there are missing entries in the matrix. Therefore, the operator $\mathcal{A}$ is not the identity $\mathcal{I}$. We randomly select the missing entries from all the entries. We let $r=25$ and add both the sparse noise with parameter $s$ and the Gaussian noise with parameter $\sigma$ to the true matrix $\vL^\star$. Then we apply Alg.~\ref{Alg2} to recover the low-rank matrix, and the relative error to $\vL^\star$ is used to evaluate the performance. The results for different settings are in Table~\ref{tab2}. For the first three cases with $s=20$, we choose $(\mu=0.5,~\lambda=0.04)$, while we let $(\mu=0.1,~\lambda=0.01)$ for the last case with $s=5$. Note that, even with missing entries, Alg.~\ref{Alg2} can reconstruct the low-rank matrix accurately.

\begin{table}[!ht]
	\begin{center}
		\begin{tabular}{|c|r|c|c|}
			\hline
			{s} & {$\sigma$} & ratio of missing entries &$RE(\vL,\vL^\star)$ by Alg.~\ref{Alg2} \\
			\hline
			20 & 0.05 & 10\%& 0.0079 \\
			\hline
			20 & 0.05 & 20\% & 0.0088\\
			\hline
			20 & 0.05  & 50\%& 0.0201\\
			\hline
			5 & 0.01  & 50\% & 0.0015\\
			\hline
		\end{tabular}
	\end{center}
	\caption{Performance of Alg.~\ref{Alg2} on low-rank matrix recovery with missing entries. We change the level of sparsity in the sparse noise, standard deviation of the Gaussian noise, and the ratio of missing entries. }\label{tab2}
\end{table}

\subsection{Real image experiment}\label{sec:image}
In this section, we consider the three algorithms applied to image processing problems. Since natural images are not low-rank essentially, we consider two cases on two different images (`cameraman' and `Barbara'). For the $256\times 256$ cameraman image (the pixel values are from 0 to 255), we create an image with rank 37 from a low-rank approximation of the original image. Then we add $20\%$ salt and pepper impulse noise and Gaussian noise with standard variance 4. We set 42 as the upper bound of the rank of the low-rank image for all algorithms. We let $\lambda=0.03$ for Shen et al. and $(\mu=0.5,~\lambda = 0.06)$ for our model. To compare the performance of both models, we use the relative error defined in the last subsection and peak signal to noise ratio (PSNR) defined as 
$$\mbox{PSNR} := 10 \log_{10} {\mbox{Peak\_Val}^2\over \mbox{MSE}}.$$
Here $\mbox{Peak\_Val}$ is the largest value allowed at a pixel (255 in our case), and $\mbox{MSE}$ is the mean squared error between the recovered image and the true image. The numerical results are shown in Fig.~\ref{fig:camera}. From Fig.~\ref{fig:camera}(A-C), we can see that our proposed model performs better than Shen et al.~\cite{Shen018}. For the proposed model, we also compare the speed of three algorithms: Alg.~\ref{Alg1}, Alg.~\ref{Alg1} with standard SVD, and Alg.~\ref{Alg2} in Fig.~\ref{fig:camera}(D).  For both plots, we can see that the Gauss-Newton approach increases the speed comparing to the standard SVD approach. From the decrease of the objective function value, we can see that the accelerated algorithm Alg.~\ref{Alg2} is faster than the nonaccelerated Alg.~\ref{Alg1}. 

\begin{figure}[!ht]
	\centering
	\begin{subfigure}[t]{0.4\textwidth}
		\includegraphics[trim={54pt 48pt 54pt 18pt},clip,width=\linewidth]{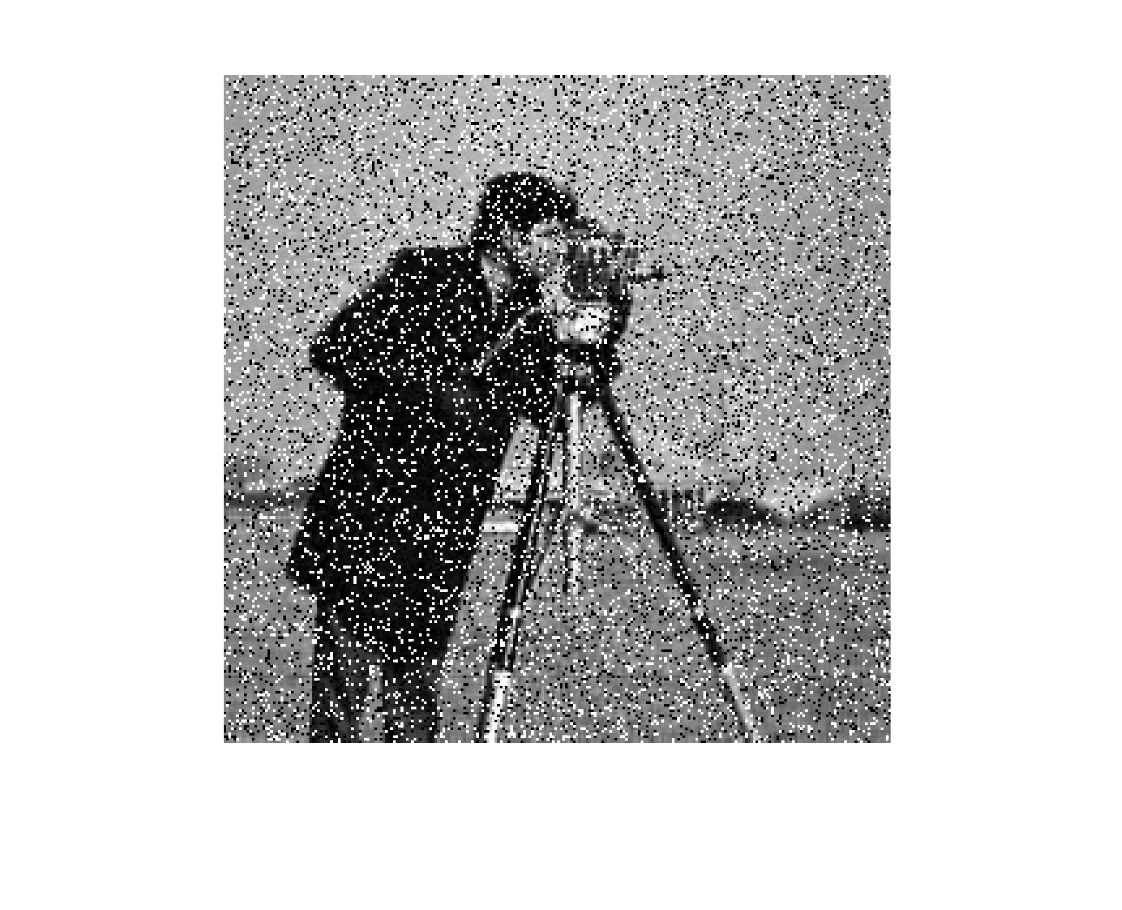}
		\caption{Corrupted image \\RE: 0.4760, PSNR:  12.76}
	\end{subfigure}\quad \quad
	\begin{subfigure}[t]{0.4\textwidth}
		\includegraphics[trim={54pt 48pt 54pt 18pt},clip, width=\linewidth]{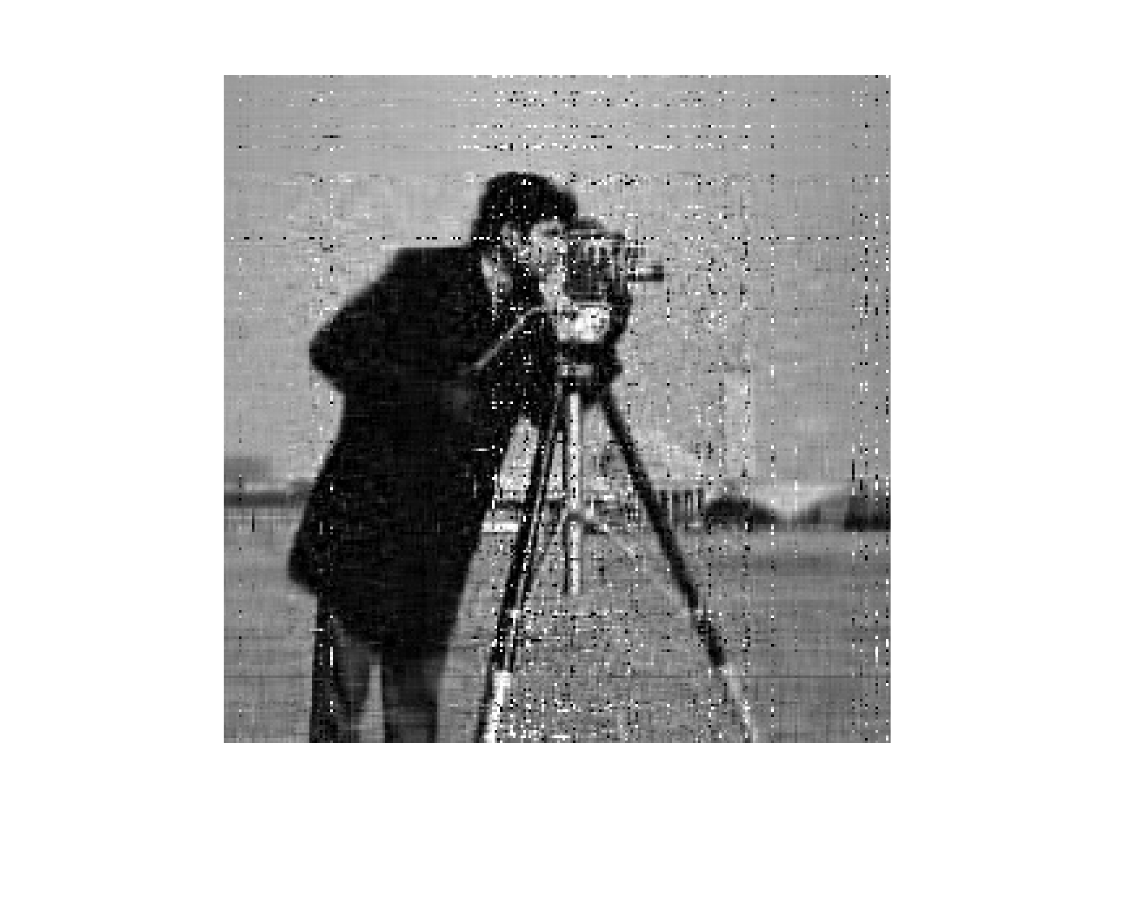}
		\caption{Recovered by Shen et al.\\RE: 0.1736, PSNR: 21.52}
	\end{subfigure}\\
	\begin{subfigure}[t]{0.4\textwidth}
		\includegraphics[trim={54pt 48pt 54pt 18pt},clip,width=\linewidth]{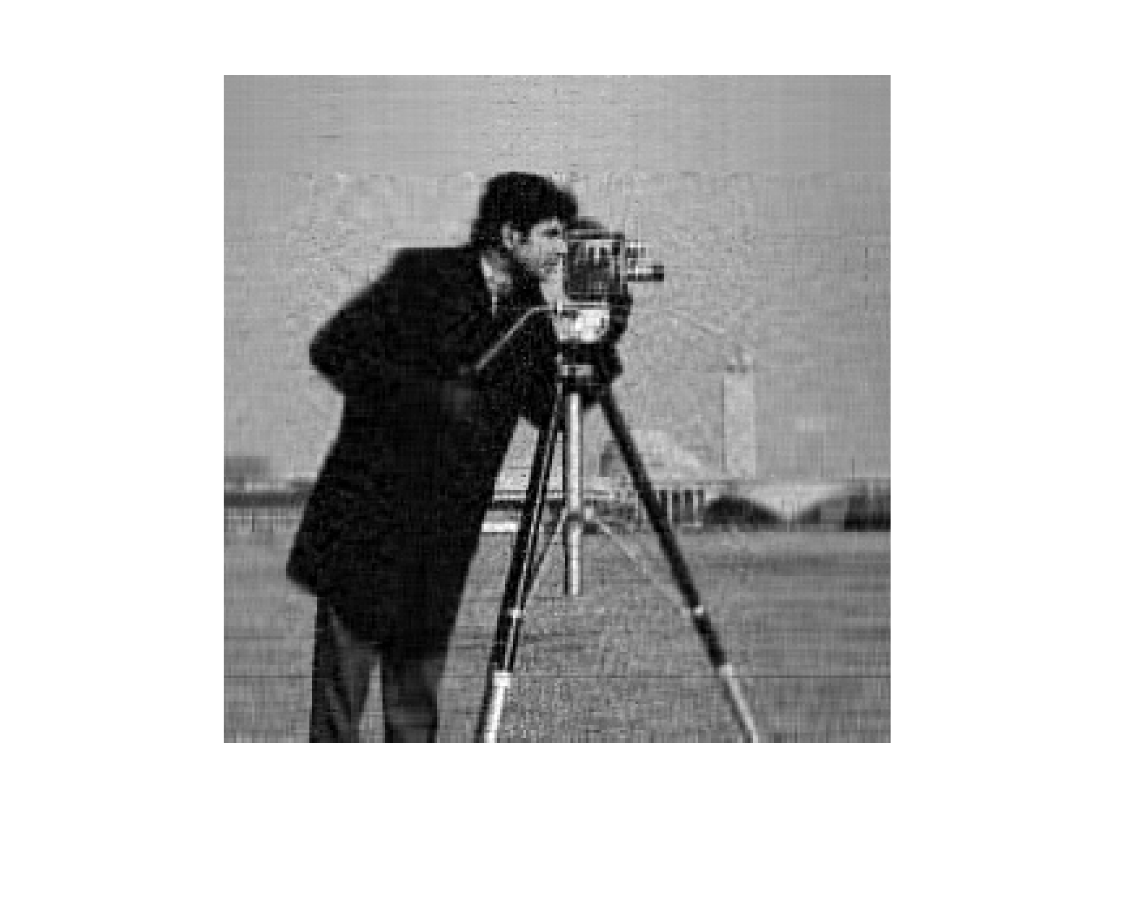}
		\caption{Recovered by Alg.~\ref{Alg2} \\RE: 0.0457, PSNR:33.11}
	\end{subfigure}
	\begin{subfigure}[t]{0.45\textwidth}
		\includegraphics[width=\linewidth]{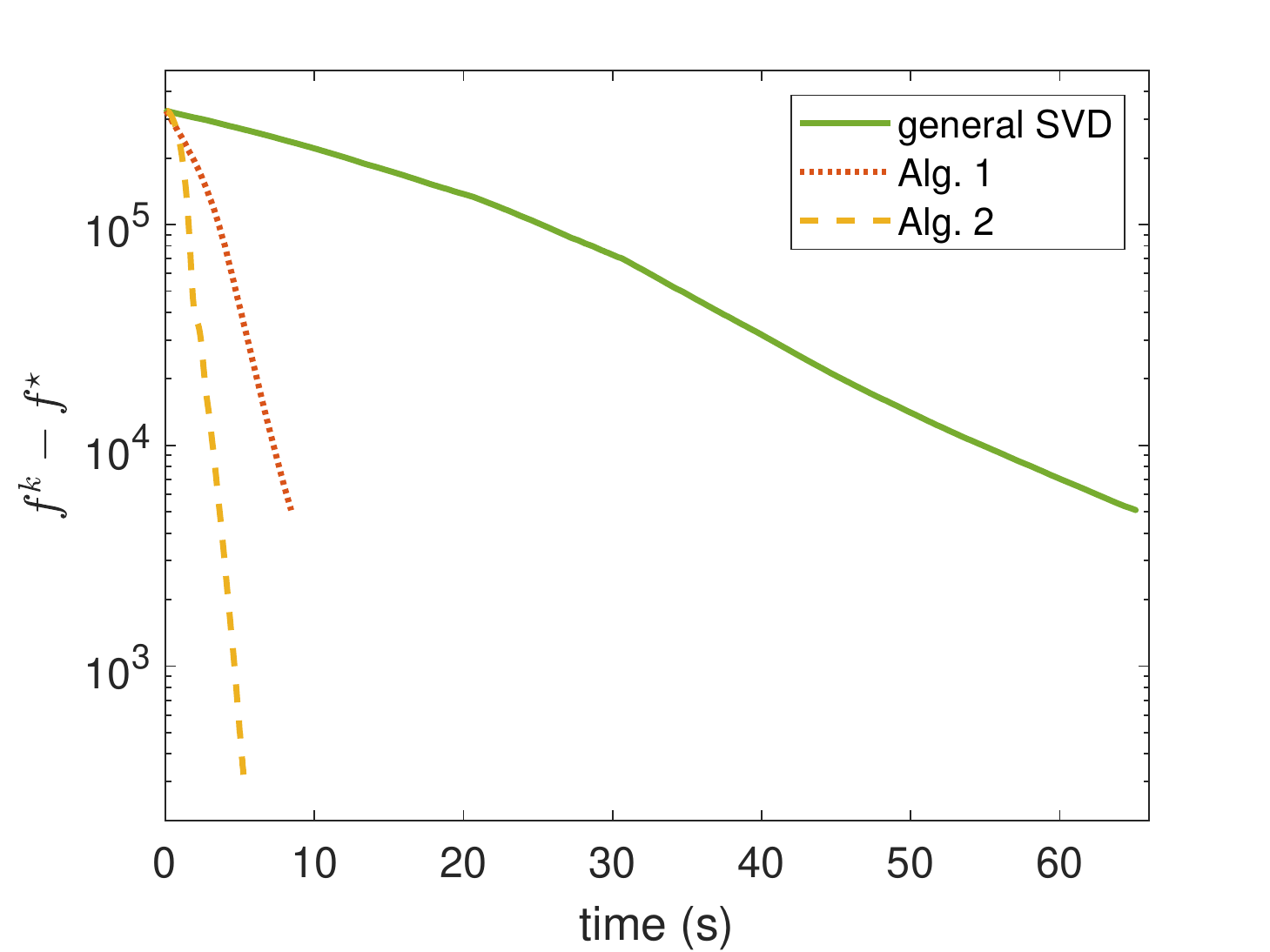}
		\caption{Comparison of the objective function value vs time for three algorithms}
	\end{subfigure}
	\caption{The numerical experiment on the `cameraman' image. (A-C) show that the proposed model performs better than Shen et al.'s both visually and in terms of RE and PSNR. (D) compares the objective values vs time for general SVD, Alg.~\ref{Alg1}, and Alg.~\ref{Alg2}. Here $f^\star$ is the value obtained by Alg.~\ref{Alg2} with more iterations. It shows the fast speed with the Gauss-Newton approach and acceleration. With the Gauss-Newton approach, the computation time for Alg.~\ref{Alg1} is reduced to about 1/7 of the one with standard SVD (from 65.11s to 8.43s). The accelerated Alg.~\ref{Alg2} requires 5.2s, though the number of iterations is reduced from 3194 to 360.}\label{fig:camera}
\end{figure}

Next, we use the original $512\times 512$ barbara image (the pixel values are from 0 to 255) without modification and add the same two types of noise as in the cameraman image. Because the original image is not low-rank, we choose the upper bound of rank $p=50$. We let $\lambda=0.03$ for Shen et al. and $(\mu=0.5,~\lambda = 0.06)$ for our model. The comparison result is shown in Fig.~\ref{fig:barbara}, and it is similar to the cameraman image. We also applied the acceleration to Shen et al.'s algorithm and obtained a better image with $\mbox{RE}=0.1447$ and $\mbox{PSNR}=22.37$.

\begin{figure}[!ht]
	\centering
	\begin{subfigure}[t]{0.4\textwidth}
		\includegraphics[trim={54pt 48pt 54pt 18pt},clip,width=\linewidth]{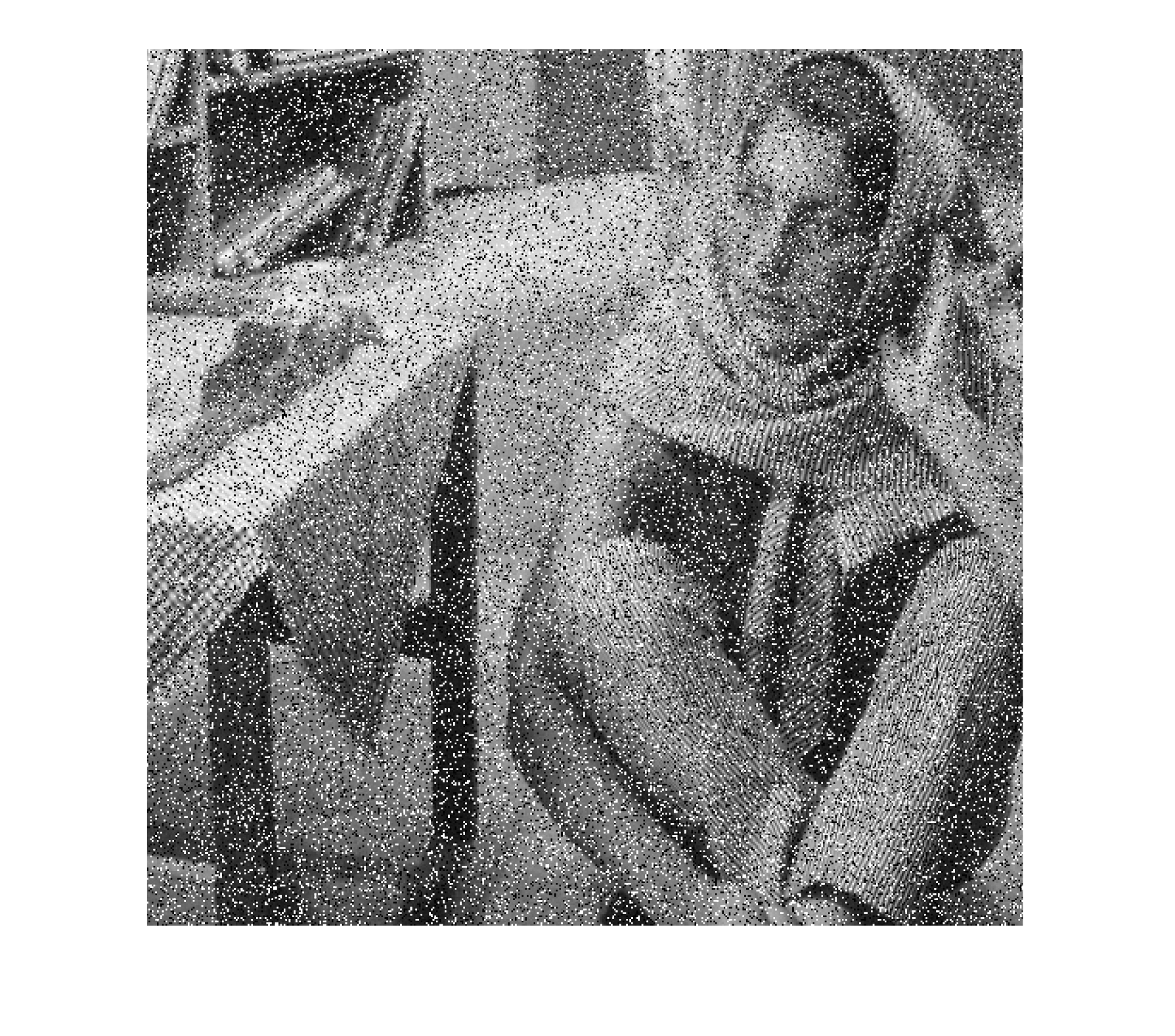}
		\caption{Corrupted image \\RE: 0.4821, PSNR:  11.91}
	\end{subfigure}\quad\quad
	\begin{subfigure}[t]{0.4\textwidth}
		\includegraphics[trim={54pt 48pt 54pt 18pt},clip, width=\linewidth]{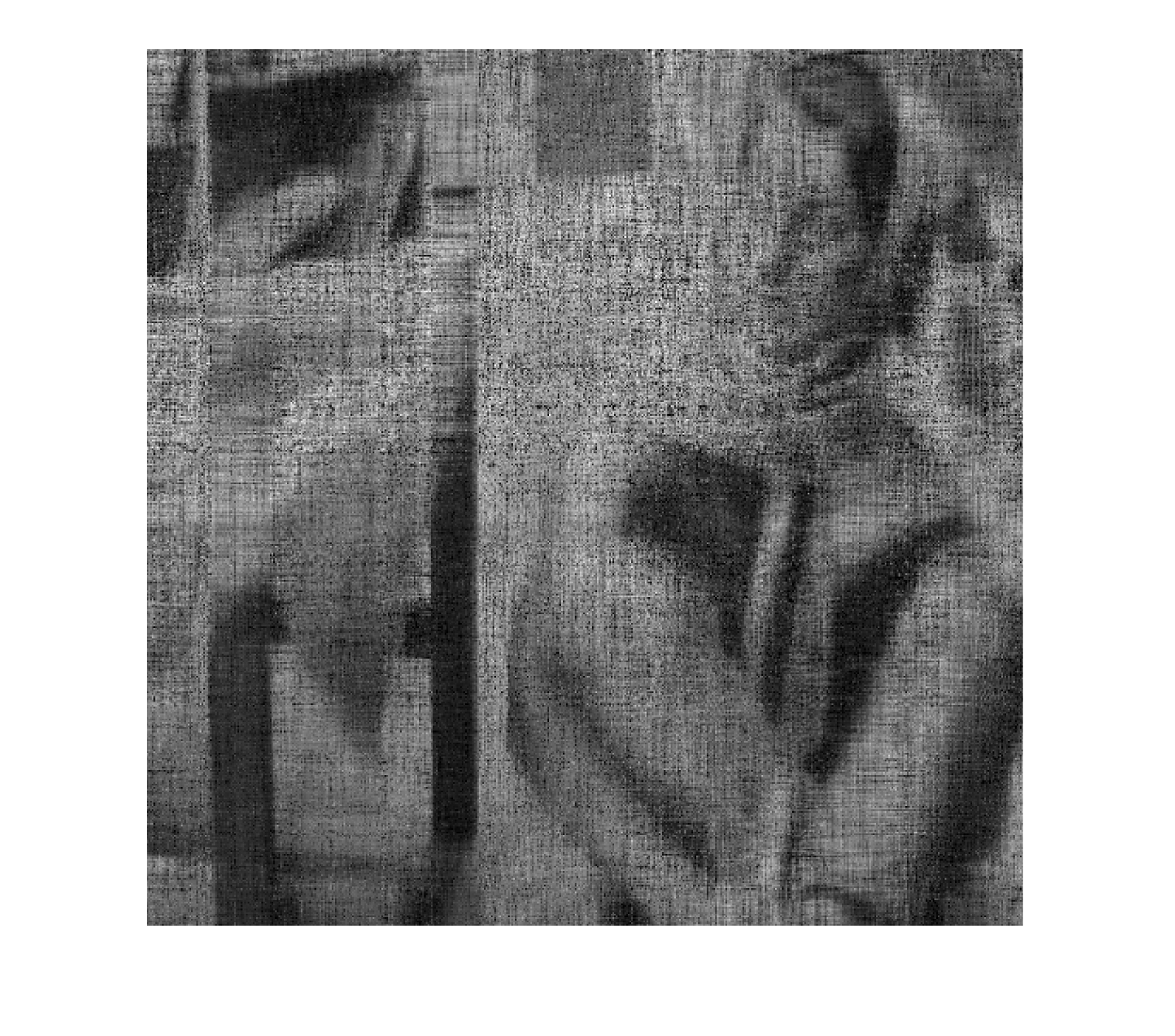}
		\caption{Recovered by Shen et al\\RE: 0.3368, PSNR: 15.03}
	\end{subfigure}\\
	\begin{subfigure}[t]{0.4\textwidth}
		\includegraphics[trim={54pt 48pt 54pt 18pt},clip,width=\linewidth]{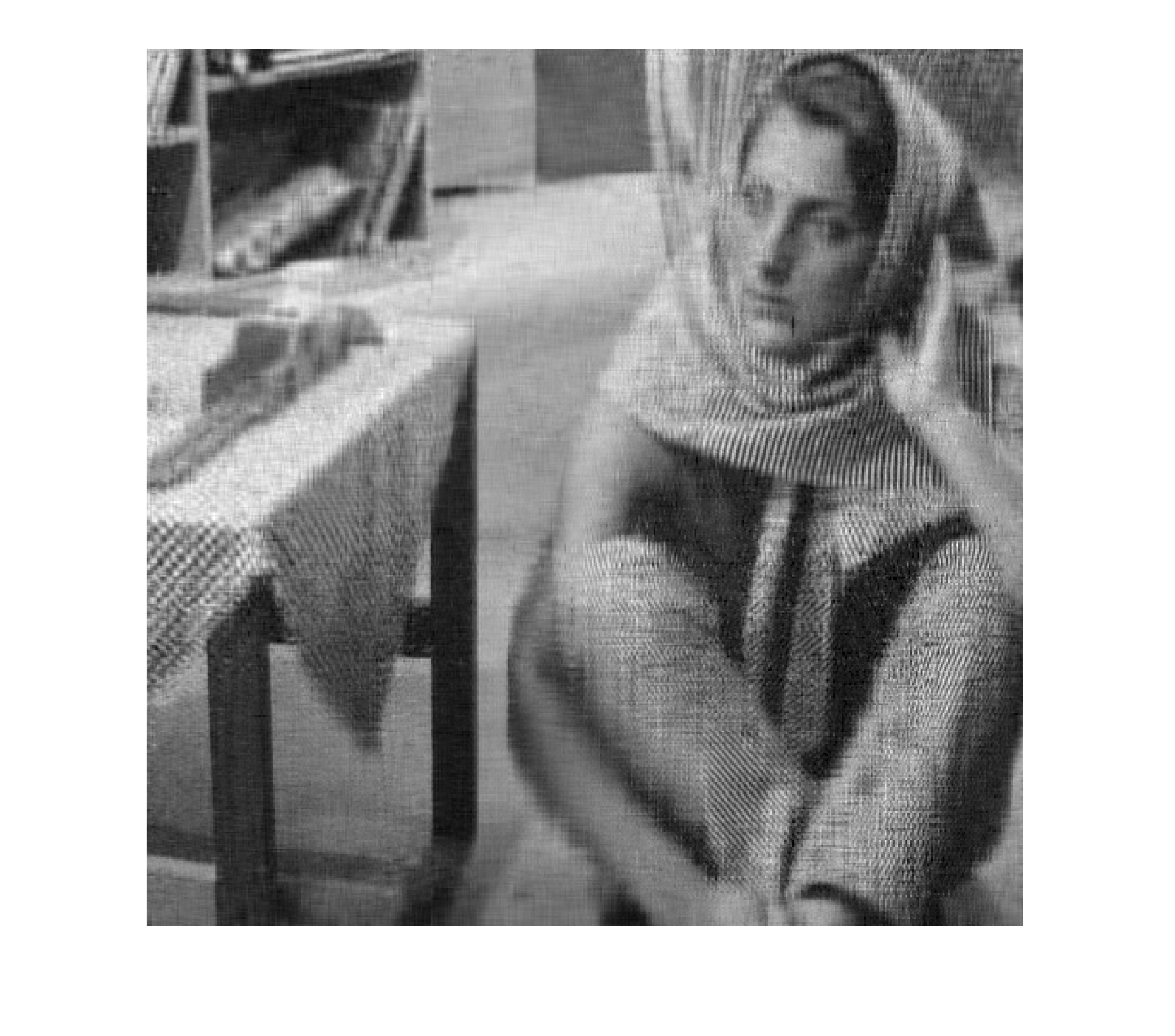}
		\caption{Recovered by Alg.~\ref{Alg2} \\RE: 0.1317, PSNR: 23.18}
	\end{subfigure}
	\begin{subfigure}[t]{0.45\textwidth}
		\includegraphics[width=\linewidth]{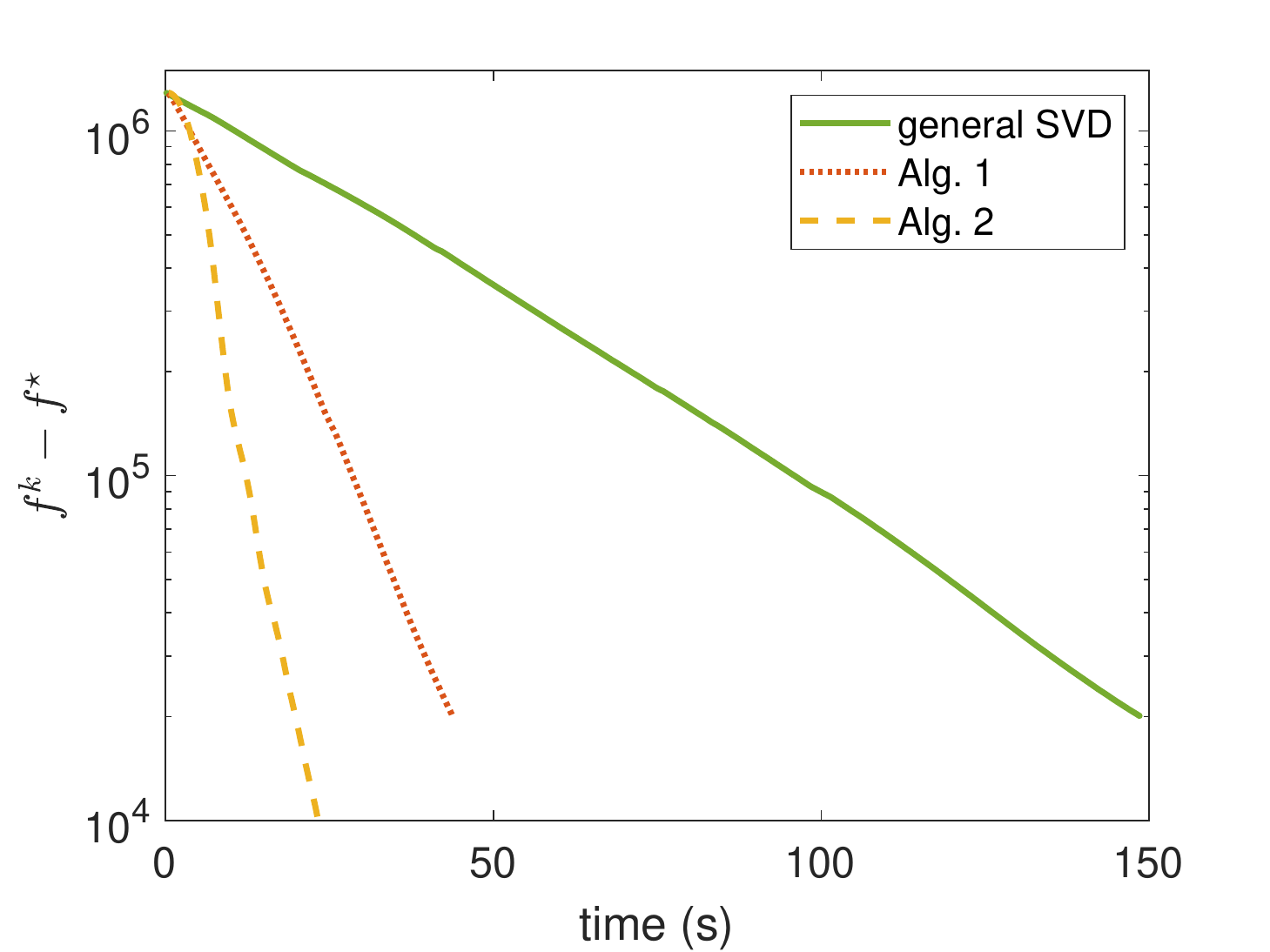}
		\caption{Comparison of the objective function value vs time for three algorithms}
	\end{subfigure}
	\caption{The numerical experiment on the `Barbara' image. (A-C) show that the proposed model performs better than Shen et al.'s both visually and in terms of RE and PSNR.  (D) compares the objective values vs time for general SVD, Alg.~\ref{Alg1}, and Alg.~\ref{Alg2}. Here $f^\star$ is the value obtained by Alg.~\ref{Alg2} with more iterations. It shows the fast speed with the Gauss-Newton approach and acceleration. With the Gauss-Newton approach, the computation time for Alg.~\ref{Alg1} is reduced to less than 1/3 of the one with standard SVD (from 148.6s to 43.7s). The accelerated Alg.~\ref{Alg2} requires 23.3s, though the number of iterations is reduced from 3210 to 300.}\label{fig:barbara}
\end{figure}

\section{Concluding remarks}\label{section4}
In this paper, we introduced a new model for RPCA when an upper bound of the rank is provided. For the unconstrained RPCA problem, we formulate it as the sum of one smooth function and one nonsmooth nonconvex function. Then we derive an algorithm based on proximal-gradient. This proposed algorithm has the alternating minimization algorithm~\cite{Shen018} as a special case. Because of the connection between this algorithm and proximal gradient, we adopted an acceleration approach and proposed an accelerated algorithm. Both proposed algorithms have two advantages comparing to existing algorithms. First, different from algorithms that require accurate rank estimations, the proposed algorithms are robust to the upper bound of the rank. Second, we apply the Gauss-Newton algorithm to avoid the computation of singular values for large matrices, so our algorithm is faster than those algorithms that require SVD. Except for problem~\eqref{pro:general}, this algorithm can be generalized to solve many other variants. 

\subsection{Nonconvex penalties on the singular values}
In the problem~\eqref{pro:general}, we choose the convex nuclear norm for the low-rank component in the objective function, which is the $\ell_1$ norm on the singular values. The $\ell_1$ norm pushes all singular values toward zero for the same amount, bringing bias in the solution. To promote the low-rankness of the low-rank component (or sparsity of its singular values), we can choose nonconvex regularization terms for the singular values. The idea for nonconvex regularization is to reduce the bias by pushing less on larger singular values. Some examples of nonconvex regularization are $\ell_p$ ($0\leq p<1$)~\cite{chartrand2007exact}, smoothly clipped absolute deviation (SCAD)~\cite{fan2001variable}, minimax concave penalty (MCP)~\cite{zhang2010nearly}, nonconvex weighted $\ell_1$~\cite{huang2015nonconvex}, etc. When these regularization terms are applied, the only difference is in the third step for finding $\vX$ in Lemma~\ref{lem2.3}. Currently, we have to apply the soft thresholding on the singular values. When nonconvex regularization is used, we apply the corresponding thresholding on the singular values. In this case, all the convergence results stay valid. 

\subsection{Other regularization on the sparse component}
We can also replace the $\ell_1$ norm of the sparse component with other regularization terms. Similarly to the penalty on the singular values, the $\ell_1$ norm on the sparse component brings bias, and we can use nonconvex regularization terms. Paper~\cite{wen2019nonconvex} uses both nonconvex regularization terms for the low-rank and sparse components. When different regularization terms are used on the sparse component, the new function $f_\lambda$ (see~\eqref{eq:flambda} for the definition) may not be differentiable any more. In this case, the convergence results do not hold. 

\subsection{Constrained problems}
When there is no noise in the measurements, the problem becomes constrained, and the previous algorithm can not be applied directly. Reference~\cite{Shen018} uses the penalty method and gradually increases the weight for the penalization to approximate the constrained problem. Here, we introduce a new method based on ADMM. We consider the following constrained problem
\begin{equation}\label{pro:model_C}
\Min_{\vL,\vS }~{\mu}\|\vL\|_*+  \|\vS\|_1, ~\St~ \mbox{rank}(\vL)\leq p,~\vD=\vL+\vS.
\end{equation}
When we apply ADMM, the steps are 
\begin{subequations}
	\begin{align}
	\vL^{k+1} = &~\argmin_{\vL:\mbox{rank}(\vL)\leq p} {\mu}\|\vL\|_* + {\alpha\over2}\|\vD-\vL-\vS^k+\frac{\vZ^k}{\alpha}\|_F^2;\\
	\vS^{k+1} = &~\argmin_\vS  \|\vS\|_1 + {\alpha\over2}\|\vD-\vL^{k+1}-\vS+\frac{\vZ^k}{\alpha}\|_F^2;\\
	\vZ^{k+1} = &~\vZ^k - \alpha (\vL^{k+1}+\vS^{k+1}-D).
	\end{align}
\end{subequations}
The first step is exactly the proximal operator that can be solved from Lemma~\ref{lem2.3}. The other two steps are easy to compute. This algorithm has only one parameter $\alpha$, while penalty methods, such as that in~\cite{Shen018}, require additional parameters to increase the weight for the penalization.

\section*{Acknowledgement} The authors thank Dr. Yuan Shen for sharing the code of the algorithm proposed in~\cite{Shen018}. 
The authors would like to thank two anonymous reviewers for their helpful comments and suggestions.


\medskip
Received xxxx 20xx; revised xxxx 20xx.
\medskip

\end{document}